\newtheorem{theorem}{Theorem}[section]
\newtheorem{lemma}[theorem]{Lemma}
\newtheorem{proposition}[theorem]{Proposition}
\newtheorem{corollary}[theorem]{Corollary}
\newtheorem{thm}{Theorem}
\newtheorem{cor}{Corollary}
\begin{document}

\title[T-braces]{On the structure of braces whose subideals are ideals} 

\author{Martyn R. Dixon}
\address[Martyn Dixon]
{Professor Emeritus, Department of Mathematics\\
University of Alabama\\
Tuscaloosa, AL 35487-0350, U.S.A.}
\email{mdixon@retiree.ua.edu}
\author{Leonid A. Kurdachenko}
\address[Leonid Kurdachenko]
{Department of Algebra, Facultet of mathematic and mechanik\\
National University of Dnepropetrovsk\\
Gagarin prospect 72\\
Dnipro 10, 49010, Ukraine.}
\email{lkurdachenko@i.ua, lkurdachenko@gmail.com}
\author[I. Subbotin]{Igor Ya. Subbotin}
\address[Igor Ya. Subbotin]{Department of Mathematics and Natural Sciences, National University\\
5245 Pacific Concourse Drive,  \\
Los Angeles, CA 90045-6904, USA.}
\email{isubboti@nu.edu}

\begin{abstract} 
This article begins the study of T-braces, those skew left braces of abelian type in which the relation of being an ideal is a transitive relation.
\end{abstract} 
\thanks{The second author is grateful for the support of the   Isaac Newton Institute for Mathematical Sciences, Cambridge and the University of Edinburgh provided in the framework of the LMS Solidarity
Supplementary Grant Programme. This work has been also supported by the EPSRC Grant  No. EP/R014604/1. 
He is sincerely grateful to Agata Smoktunowicz. 
}
\dedicatory{To our friends Patrizia Longobardi and Mercede Maj on the occasion of their retirement}

\keywords{Brace, nilpotent brace, $\star$-hypercentral brace, T-brace}

\subjclass[2010]{Primary: 16N80; Secondary 16T25, 16N40, 20F19}

\maketitle

\newcommand{\sub}[2]{\langle#1,#2\rangle}
\newcommand{\cyclic}[1]{\langle #1\rangle}
\newcommand{\norm}{\triangleleft\,}
\newcommand{\core}[2]{\text{core}\,_{#1}\,#2}
\newcommand{\cpin}{C_{p^{\infty}}}
\newcommand{\co}[1]{\text{$\mathbf{#1}$}}
\newcommand{\cogo}[2]{\co{#1}\mathfrak{#2}}
\newcommand{\ann}{\text{Ann\,}}
\newcommand{\spec}{\text{Spec\,}}
\newcommand{\listel}[2]{#1_1,\dots, #1_{#2}}
\newcommand{\aut}{\text{Aut}\,}
\newcommand{\cent}[2]{C_{#1}(#2)}
\newcommand{\Dr}{\text{Dr}\,}
\newcommand{\dir}[3]{\underset{#1\in #2}{\Dr}#3_{#1}}
\newcommand{\nocl}[2]{\langle #1\rangle^{#2}}
\newcommand{\qn}{\,\text{qn}\,}
\newcommand{\comm}[2]{#1^{-1}#2^{-1}#1#2}
\newcommand{\conj}[2]{#1^{-1}#2#1}
\newcommand{\izer}[2]{N_{#1}(#2)}
\newcommand{\gl}{\text{GL\,}}
\newcommand{\subgp}[3]{\langle#1_{#2}\mid #2\in#3\rangle}
\newcommand{\direct}[3]{\underset{#1\in #2}{\Dr}\langle#3_{#1}\rangle}
\newcommand{\omn}[1]{\Omega_n{(#1)}}
\newcommand{\Hom}{\text{Hom}\,}
\newcommand{\Cr}[3]{\underset{#1\in #2}{\text{Cr}}\,#3_{#1}}
\newcommand{\inflist}[1]{\{#1_1,#1_2,\dots\}} 
\newcommand{\cart}{\text{Cr}\,} 
\newcommand{\carti}[3]{\underset{#1\geq #2}\cart #3_{#1}} 
\newcommand{\fgsub}[2]{\langle #1_1,\dots,#1_{#2}\rangle}
\newcommand{\cwr}{\,\bar{\wr}\,}
\newcommand{\minn}{\infty$-$\overline{\mathfrak{N}}}
\newcommand{\mnn}{\overline{\mathfrak{N}}}
\newcommand{\mnp}{\overline{\mathcal{P}}}
\newcommand{\mns}{\overline{\mathfrak{S}}}
\newcommand{\minsd}{\infty$-$\overline{\mathfrak{S}_d}}
\newcommand{\minnc}{\infty$-$\overline{\mathfrak{N}_c}}
\newcommand{\ms}{\mathfrak{S}}
\newcommand{\mn}{\mathfrak{N}}
\newcommand{\mf}[1]{\mathfrak{#1}}
\newcommand{\sdr}{\mathfrak{S}_d(r)}
\newcommand{\sd}{\mathfrak{S}_d}
\newcommand{\mfr}{\mathfrak{R}}
\newcommand{\ncr}{\mathfrak{N}_c(r)}
\newcommand{\mfsr}{\overline{\ms\mfr}}
\newcommand{\lslf}{\overline{(\cogo{L}{S})(\cogo{L}{F})}}
\newcommand{\mcp}{\mathcal{P}}
\newcommand{\mcps}{\mathcal{P}^*}
\newcommand{\mfas}{\mathfrak{A}^*}
\newcommand{\mfa}{\mathfrak{A}}
\newcommand{\mfss}{\mathfrak{S}^*}
\newcommand{\mfns}{\mathfrak{N}^*}
\newcommand{\wmcnq}{max-$\infty$-$\overline{\text{qn}}$ }
\newcommand{\om}{\omega}
\newcommand{\bog}[1]{\bar{\omega}(#1)}
\newcommand{\og}[1]{\omega(#1)}
\newcommand{\wi}{\overline{\om}_i}
\newcommand{\w}{\overline{\om}}
\newcommand{\fsn}{\emph{f}-subnormal }
\newcommand{\wiw}{\om_i}
\newcommand{\ww}{\om}

\newcommand{\zefin}[1]{\zeta_{\infty}(\star,#1)}
\newcommand{\zet}[1]{\zeta(\star,#1)}
\newcommand{\zen}[2]{\zeta_{#1}(\star,#2)}
\newcommand{\al}{\alpha}
\newcommand{\ga}{\gamma}
\newcommand{\be}{\beta}
\newcommand{\si}{\sigma}
\newcommand{\la}{\lambda}
\newcommand{\de}{\delta}
\newcommand{\La}{\Lambda}
\newcommand{\Om}{\Omega}
\newcommand{\ze}{\zeta}
\newcommand{\soc}[1]{\text{Soc}(#1)}
\newcommand{\zl}[1]{\text{zl}(#1)}
\newcommand{\ldl}{\leq\dots\leq}
\newcommand{\br}[1]{\textbf{br}(#1)}
\newcommand{\oq}[1]{(#1_1,#1_2,#1_3,#1_4)}
\newcommand{\su}[2]{#1_{#2}}
\newcommand{\lev}[2]{{\textbf{#1-Lev}}(#2)}
\newcommand{\cyclicp}[1]{\cyclic{#1}_+}
\newcommand{\omegn}[2]{\Om_{#1}(#2)}

\section{Introduction} \label{s:intro}

A \emph{left brace  (or skew left brace of abelian type)}  is a set $A$ together with two binary operations, addition denoted by $+$ and multiplication denoted by $\cdot$ (which is often omitted), satisfying the following conditions:

\begin{enumerate}
\item[LB1] $A$ is an abelian group under addition;
\item[LB2] $A$ is a group under multiplication;
\item[LB3] $a(b+c)= ab+ac -a$ for all $a, b, c\in  A$.
\end{enumerate}

In this paper, all braces will be left braces in this sense.
Sometimes we shall denote the additive group of $A$ by $(A,+)$ and the multiplicative group of $A$ by $(A,\cdot)$.  
Braces were first introduced by W. Rump in \cite{wR05} as a generalization of Jacobson radical rings in order to help study involutive set-theoretic solutions of the Yang-Baxter equation.  

Braces can be viewed as an algebraic structure
representing a coalescence of two groups. Unlike with rings, in braces the additive and mulitplicative identities coincide 
 and we denote this common neutral element by $0$ or $1$, as appropriate. 
 
 If $A$ is a left brace, then a subset $S$ of $A$ is called a \emph{subbrace} (more precisely a \emph{left subbrace}) 
 if $S$ is closed under  addition and multiplication and is a left brace by 
restriction of these operations to  $S$.  Thus a subset $S$ of $A$ is a subbrace of $A$ if and only if 
$(S,+)\leq (A,+)$ and $(S,\cdot)\leq (A,\cdot)$. We shall denote that $S$ is a subbrace of $A$ by $S\leq A$.

If $M$ is a subset of a left brace $A$ then the \emph{subbrace $B=\br{M}$ of $A$ generated by $M$} is the intersection of all subbraces $C$ of $A$ that contain $M$. 
 If $M$ is a finite set, then $B$ will be called \emph{finitely generated}. If $M=\{a\}$ then we write $\br{a}$ rather than $\br{\{a\}}$ and call $\br{a}$ a one-generator brace.  We refer the reader to \cite {CFT23,JVV22} for further details and properties.

 For the left brace $A$ and for each element $a\in A$ we define a map $\lambda_a:A\longrightarrow A$ by $\lambda_a(x)=ax-a$ for all $x\in A$. The map $\lambda_a$ is an automorphism of $(A,+)$. Furthermore we define $a\star b =ab-a-b$ for all $a,b\in A$. It is easy to see that $a\star b=\lambda_a(b)-b$. 
 
 A left brace $A$ is called \emph{trivial} or \emph{abelian} if $a\star b=0$ for all $a,b\in A$, or equivalently, $a+b=ab$.  In this case 
the addition and multiplication coincide.
 
We recall that a subbrace $L$ of the left brace $A$ is called a \emph{left ideal} of $A$ if $a\star b\in L$ for all elements $a\in A$ and $b\in L$.  
This means that if $x, y\in L$, then $x-y\in L$ and if $a\in A$ and $z\in L$, then $\la_a(z)\in L$,
as  in \cite{fC18}.
Furthermore, a subbrace $L$ of a left brace $A$ is an \emph{ideal} if $a\star z, z\star a\in L$ for all $a\in A, z\in L$.  We can then form a new brace, the quotient brace $A/L$, in a natural way. Quite often, it happens that we are interested in studying braces where the quotient braces are abelian, so that the operations of addition and multiplication coincide in the quotient.
 
 If $A$ is a left brace and $K,L$ are subbraces, then we let $K\star L$ denote the subgroup of $(A,+)$ generated by the elements $x\star y$, where $x\in K, y\in L$.
 
 We define two  canonical series of the left brace $A$.  First, let $A^{(1)}=A$ and recursively define $A^{(\alpha +1)}=A^{(\alpha)}\star A$ for all ordinals $\alpha$ and as usual set $A^{(\lambda)}=\cap_{\mu<\lambda}A^{(\mu)}$ for all limit ordinals $\lambda$.   It is shown in \cite{fC18} that $A^{(n)}$ is an ideal of $A$. This has been extended in \cite{DKS25} to include all ordinals $\alpha$. Clearly, $A^{(\alpha +1)}\leq A^{(\alpha)}$, for all ordinals $\alpha$.


 Similarly, let $A^{1}=A$, $A^{\alpha+1}=A\star A^{\alpha}$, for all ordinals $\alpha$ and $A^{\lambda}=\cap_{\mu<\lambda}A^{\mu}$ for all limit ordinals $\lambda$. It is shown in \cite{fC18} that $A^n$ is always a left ideal of $A$ a fact which has been extended in \cite{DKS25} to include all ordinals $\alpha$. Clearly, $A^{\alpha +1}\leq A^{\alpha}$, for all ordinals $\alpha$.


 
We shall say that $A$ is \emph{Smoktunowicz-nilpotent} if there are natural numbers $n,k$ such that $A^{(n)}=A^{k}$. Such braces were introduced by A. Smoktunowicz in the paper \cite{aS18} and are called 
\emph{strongly nilpotent} in \cite{aS22}.
 
It is clear that if $A$ is a subbrace of  $B$ and $B$ is a subbrace of $C$, then $A$ is a subbrace of $C$. However, as with many other algebraic structures, there is in general no such transitive relation for ideals. Groups in which normality is a transitive relation have been studied by several authors ( see \cite{wG57} and \cite{dR64}, for example) and a brief overview of the research on such groups and related topics can be found in the article \cite{KLMO23}. Similar studies have been conducted in other algebraic structures; for example those Leibniz algebras in which the property of being an ideal is a transitive relation were studied in the paper \cite{KSY18}. In group theory, groups in which normality is a transitive relation are called T-groups and, by analogy, a left brace $A$ is called a \emph{T-brace}, if the property of being an ideal is a transitive relation on $A$. 

A T-group is a generalization of a Dedekind group, a group in which every subgroup is normal. Dedekind braces were studied in the paper \cite{BEKP25} and in this paper we begin the study of their generalization, T-braces. The class of T-groups which satisfy other natural restrictions have been studied further. One such natural restriction is nilpotency--nilpotent T-groups are clearly Dedekind groups. 
In this paper our approach to nilpotency is based on the following concept, first introduced by Bonatto and Jedli\v{c}ka~\cite{BJ21} 
 and Jespers, Van Antwerpen and Vendramin~\cite{JVV22} 
 
Let $A$ be a left brace. The set 
 \begin{align*}
 \zeta(\star, A)&=\{a\in A| a\star x=x\star a= 0 \text{ for all }x\in A\}\\
 &=\{a\in A| ax=a+x=xa \text{ for all } x\in A\}\\
 &=\{a\in A| \la_a(x)=x \text{ and } \la_x(a)=a \text{ for all }x\in A\}
 \end{align*}
 is here called the \emph{$\star$-center} of $A$.  The set $\zet{A}$ is an ideal of $A$ which is contained in the center, $\zeta(A)$, of $(A,\cdot)$; this known result \ appears in \cite{BJ21} and \cite{JVV22}. We denote the center of the multiplicative group of $A$ by $\zeta(A)$.

Starting from the $\star$-center and by  analogy with groups we construct the \emph{upper $\star$-central series}
 \[
 0=\zeta_0(\star, A)\leq \zeta_1(\star, A)\leq \dots \zeta_{\alpha}(\star, A)\leq \zeta_{\alpha+1}(\star, A) \leq 
\dots \zeta_{\gamma}(\star, A)
\]
 as follows. We let 
 \begin{align*}
 \zeta_{1}(\star, A)&=\zeta(\star, A) \text{ and }\\
 \zeta_{\alpha+1}(\star, A)/\zeta_{\alpha}(\star, A)&=\zeta(\star, A/\zeta_{\alpha}(\star, A))
 \end{align*}
  for all ordinals $\alpha$; as usual for limit ordinals $\lambda$ we set $\zeta_{\lambda}(\star, A)=\cup_{\mu<\lambda}\zeta_{\mu}(\star, A)$.  

Each term of this series is an ideal of $A$. The last term $\zeta_{\infty}(\star, A)=\zeta_{\gamma}(\star, A)$ of this series is called the  \emph{upper $\star$-hypercenter} of $A$ and we denote the length of the upper $\star$-central series of $A$ by $\text{zl}(A)$. Furthermore, if $A=\zeta_{\infty}(\star, A)$, then $A$ is called a \emph{$\star$-hypercentral} brace; in this case, if $\text{zl}(A)$ is finite, then we say that $A$ is \emph{$\star$-nilpotent}.  This series has also been discussed in \cite{BJ21, CFT23} and \cite{mT23} but there this series is called the \emph{upper annihilator series} and the last term is then called the \emph{hyper-annihilator}; a $\star$-hypercentral brace is called  \emph{annihilator hypercentral} in \cite{mT23} and, in \cite{BJ21, JVV22},  $\star$-nilpotency is called \emph{central nilpotency (and respectively) annihilator nilpotency}. That Smoktunowicz nilpotency and $\star$-nilpotency are equivalent is evident, for example, in \cite[Proposition 2.10]{DKS25}.
Further work concerning $\star$-hypercentral braces can be found in \cite{DKS25}.

In this paper we begin the study of Smoktunowicz-nilpotent T-braces. The situation here is more complicated than with  groups and Leibniz algebras. Our first results are devoted to the study of left braces in which the additive group is not periodic. Our first main result is as follows.

\begin{thm}\label{thma}
Let $A$ be a T-brace. If the additive group of the $\star$-center of $A$ is torsion-free, then the upper $\star$-hypercenter of $A$ and the $\star$-center of $A$ coincide. 
\end{thm}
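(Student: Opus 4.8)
The plan is to show that the upper $\star$-central series cannot climb past its first term, i.e.\ that $\zen{2}{A} = \zet{A}$; once this is established, the recursive definition of the series forces $\zen{\alpha}{A} = \zet{A}$ for every $\alpha$ (a quotient with trivial $\star$-center produces no further terms), and hence $\zefin{A} = \zet{A}$. Write $Z = \zet{A}$, which is a trivial (abelian) brace and, by hypothesis, a torsion-free additive group. First I would record the arithmetic attached to an element $a \in \zen{2}{A}$: since $a \star x \in Z$ and $x \star a \in Z$ for all $x$, the map $\delta_a = \lambda_a - \mathrm{id}$ is an additive endomorphism of $(A,+)$ with image in $Z$; because every element of $Z$ is $\star$-central, $\delta_a$ kills $Z$, so $\delta_a^2 = 0$. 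Consequently $\lambda_{a^n} = \lambda_a^n = \mathrm{id} + n\delta_a$, and $a^n = na + \binom{n}{2}w$ where $w = a \star a \in Z$, while for central $c$ one has $u + c = uc$. These identities let me describe $\br a$ explicitly as $\cyclicp{a} + \cyclicp{w}$.

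The workhorse is the observation that, for any $p \in \zen{2}{A}$, the one-generator subbrace $\br p$ is a subideal of defect at most two. Indeed, $\br{p, Z}$ is an ideal of $A$ (modulo $Z$ it is the subbrace generated by the $\star$-central element $p + Z$, which is an ideal of $A/Z$), and a direct check using the identities above shows $\br p \trianglelefteq \br{p, Z}$, the relevant $\star$-products all landing in $\cyclicp{p \star p} \subseteq \br p$. Invoking the defining property of a T-brace, $\br p$ is therefore an ideal of $A$. Applying this with $p = a$ gives $a \star x \in \br a \cap Z$ and $x \star a \in \br a \cap Z$ for every $x$, and when $w = a \star a \neq 0$ a short computation (using that $\delta_a \neq 0$ together with torsion-freeness of $Z$ to see $\cyclicp{a}\cap Z = 0$) identifies $\br a \cap Z$ as the infinite cyclic group $\cyclicp{w}$.

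I expect the main obstacle to be ruling out the possibility $a \star a \neq 0$, that is, showing that $\zen{2}{A}$ cannot contain a genuinely non-abelian element. The device I would use is to run the previous paragraph simultaneously for $p = a$ and for $p = 2a$ (also in $\zen{2}{A}$). Since $\lambda_{2a} = \mathrm{id} + 2\delta_a$, one has $(2a)\star x = 2(a \star x)$, while $(2a)\star(2a) = 4w$, so the ideal $\br{2a}$ forces $(2a)\star x \in \cyclicp{4w} = 4\mathbb{Z}w$, whence $a \star x \in 2\mathbb{Z}w$ for every $x$. But $a \star a = w \notin 2\mathbb{Z}w$, a contradiction. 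Hence $w = a \star a = 0$ for all $a \in \zen{2}{A}$. The crux is exactly this scaling trick: comparing the ideals generated by $a$ and $2a$ converts the torsion-freeness of $Z$ into a divisibility statement that the generator $a$ cannot satisfy.

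Finally, with $w = 0$ the subbrace $\br a = \cyclicp{a}$ is a trivial ideal of $A$, and membership of $a$ in $Z$ splits into the two defining conditions. For $a \star x = 0$: if some $a \star x_0 \neq 0$ then $\cyclicp{a} \cap Z \neq 0$, so a nonzero multiple $ma$ lies in $Z$, giving $\lambda_{a^m} = \mathrm{id}$, i.e.\ $m\delta_a = 0$, and torsion-freeness of $Z$ forces $\delta_a = 0$, a contradiction; hence $\lambda_a = \mathrm{id}$. For $x \star a = 0$: because $\cyclicp{a}$ is an ideal, $\lambda_x(a) = n_x a$ and $x \star a = (n_x - 1)a \in Z$; if $a$ has finite additive order this element is torsion, hence $0$, while if $a$ has infinite order then $\lambda_x$ restricts to a power automorphism of $\cyclicp{a}\cong\mathbb{Z}$, so $n_x = \pm 1$, and the value $n_x = -1$ is excluded because it would place $2a$ in the torsion-free centre $Z$ and then force $\lambda_y(a) = a$ for all $y$. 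Either way $x \star a = 0$, so $a \in Z$. This yields $\zen{2}{A} = \zet{A}$, and therefore $\zefin{A} = \zet{A}$, as required.
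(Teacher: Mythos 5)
Your proposal is correct and takes essentially the same route as the paper: your ``workhorse'' (that $\br{p}$ is an ideal of $A$ for $p\in\zen{2}{A}$, obtained by exhibiting it as a subideal of defect two through $\br{p}+\zet{A}$ and invoking the T-property) is the paper's Lemma~\ref{l:l10}, and your scaling trick comparing the ideal $\br{2a}=\cyclicp{2a}\oplus\cyclicp{4w}$ with the element $2w$ is exactly the contradiction of the paper's Lemma~\ref{l:l12}. The remaining divergences are cosmetic: your $\delta_a=\lambda_a-\mathrm{id}$ calculus replaces the paper's $\star$-identities, and your closing paragraph (deducing $a\in\zet{A}$ from $a\star a=0$) plays the role of the paper's Lemma~\ref{l:l5}, handling additively torsion elements of $\zen{2}{A}$ directly rather than excluding them via Theorem~\ref{t:cor49}.
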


This result has the following easy consequences
\begin{cor}\label{cor1}
Let $A$ be a T-brace whose additive group is torsion-free. If $A$ is $\star$-hypercentral, then $A$ is abelian.
\end{cor}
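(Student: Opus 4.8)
The plan is to derive this directly from Theorem~\ref{thma}, since the hypothesis on the whole additive group is strong enough to activate that theorem, and the $\star$-hypercentral assumption then forces a collapse down to the $\star$-center.

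First I would observe that the additive group of the $\star$-center, $(\zet{A},+)$, is a subgroup of $(A,+)$, because $\zet{A}$ is a subbrace (in fact an ideal) of $A$. A subgroup of a torsion-free abelian group is again torsion-free, so $(\zet{A},+)$ is torsion-free. This verifies precisely the hypothesis of Theorem~\ref{thma}, namely that the additive group of the $\star$-center of $A$ is torsion-free.

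Next, applying Theorem~\ref{thma} yields $\zefin{A}=\zet{A}$; that is, the upper $\star$-central series of $A$ stabilises already at its first term. On the other hand, the assumption that $A$ is $\star$-hypercentral means exactly that $A=\zefin{A}$. Combining these two equalities gives $A=\zet{A}$.

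Finally, $A=\zet{A}$ says that every $a\in A$ satisfies $a\star x=x\star a=0$ for all $x\in A$, which is the definition of $A$ being abelian (trivial). Hence $A$ is abelian, as required. The argument is essentially immediate once Theorem~\ref{thma} is in hand; the only point requiring a moment's thought is the routine observation that torsion-freeness passes from $(A,+)$ to the additive group of the $\star$-center, so that the theorem is applicable. There is therefore no genuine obstacle here beyond Theorem~\ref{thma} itself.
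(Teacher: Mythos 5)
Your proof is correct and takes essentially the same route as the paper: the paper simply declares this corollary ``clear,'' the intended argument being exactly yours --- torsion-freeness of $(A,+)$ passes to the additive group of the $\star$-center, Theorem~\ref{thma} then gives $\zeta_{\infty}(\star,A)=\zeta(\star,A)$, and $\star$-hypercentrality forces $A=\zeta(\star,A)$, i.e.\ $A$ is abelian. Nothing to add.
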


\begin{cor}\label{cor2}
Let $A$ be a T-brace whose additive group is torsion-free. If $A$ is Smoktunowicz-nilpotent, then $A$ is abelian.
\end{cor}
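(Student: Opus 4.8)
The plan is to deduce Corollary~\ref{cor2} from Corollary~\ref{cor1} by reconciling the two notions of nilpotency that appear in their hypotheses. First I would invoke the equivalence, recorded in the discussion above and justified in \cite[Proposition 2.10]{DKS25}, between Smoktunowicz-nilpotency and $\star$-nilpotency: thus the hypothesis that $A$ is Smoktunowicz-nilpotent is the same as saying that $A$ is $\star$-nilpotent. Unwinding the definition of $\star$-nilpotency, this means $A=\zefin{A}$ with $\zl{A}$ finite; in particular $A=\zefin{A}$, so that $A$ is a $\star$-hypercentral brace.

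With this translation in place, the second and final step is to apply Corollary~\ref{cor1} directly. Since $A$ is a T-brace whose additive group is torsion-free and, by the previous step, $A$ is $\star$-hypercentral, Corollary~\ref{cor1} forces $A$ to be abelian, which is precisely the desired conclusion. No extra computation is needed beyond passing from the nilpotency hypothesis to $\star$-hypercentrality.

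The only delicate point --- and the nearest thing to an obstacle --- is to confirm that the equivalence is used in the correct direction and that the finiteness of $\zl{A}$ plays no essential role. In fact Corollary~\ref{cor1} requires only that $A$ coincide with its upper $\star$-hypercenter, not that the upper $\star$-central series have finite length, so the weaker conclusion $A=\zefin{A}$ already suffices and the finiteness supplied by Smoktunowicz-nilpotency is a bonus we do not actually use. Consequently all of the mathematical substance resides in Theorem~\ref{thma} and its immediate Corollary~\ref{cor1}; Corollary~\ref{cor2} is then a purely formal consequence, obtained by feeding the Smoktunowicz-nilpotency hypothesis through the $\star$-hypercentrality hypothesis of Corollary~\ref{cor1}.
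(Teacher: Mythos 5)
Your proposal is correct and matches the paper's own argument: the paper likewise disposes of Corollary~\ref{cor2} by noting that Smoktunowicz-nilpotent braces are $\star$-nilpotent (citing \cite{DKS25}) and then applying Corollary~\ref{cor1}. Your additional observation that only the implication from Smoktunowicz-nilpotency to $\star$-hypercentrality is needed, and not the finiteness of $\text{zl}(A)$, is accurate.
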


We shall say that a $\star$-hypercentral brace is periodic if its additive group $(A,+)$ is periodic and non-periodic otherwise. 

\begin{cor}\label{cor3}
Let $A$ be a non-periodic $\star$-hypercentral T-brace and let $T$ be the torsion subgroup of the additive group of $A$. Then $T$ is an ideal of $A$ and $A/T$ is an abelian brace.
\end{cor}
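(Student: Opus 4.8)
The plan is to prove the two assertions in sequence: first that $T$ is an ideal of $A$, and then, having formed the quotient brace $A/T$, to identify it as a torsion-free $\star$-hypercentral T-brace and to invoke Corollary~\ref{cor1}. Once $T$ is known to be an ideal, the additive group $(A/T,+)=(A,+)/T$ is torsion-free; the quotient of a $\star$-hypercentral brace by an ideal is again $\star$-hypercentral (the images of the terms $\zen{\alpha}{A}$ form an ascending $\star$-central series of $A/T$ whose union is $A/T$); and $A/T$ is again a T-brace, since under the correspondence between ideals of $A/T$ and ideals of $A$ containing $T$ a chain $J/T\trianglelefteq K/T\trianglelefteq A/T$ lifts to $J\trianglelefteq K\trianglelefteq A$, whence $J\trianglelefteq A$ by the T-property of $A$, i.e.\ $J/T\trianglelefteq A/T$. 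Corollary~\ref{cor1} then forces $A/T$ to be abelian, which is the second assertion. Thus everything reduces to proving that $T$ is an ideal (the hypothesis of non-periodicity merely guarantees $T\neq A$, so that the conclusion is non-trivial).

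To see that $T$ is an ideal, note first that each $\lambda_a$, being an automorphism of $(A,+)$, preserves the torsion subgroup, so $\lambda_a(T)=T$ for every $a\in A$. Together with the fact that $T$ is an additive subgroup, this shows at once that $T$ is a left ideal, and indeed a subbrace, using $ab=a+\lambda_a(b)$ and $a^{-1}=\lambda_a^{-1}(-a)$ together with $\lambda_a^{-1}(T)=T$. Moreover, for any $a\in A$ and $t\in T$ we have $a\star t=\lambda_a(t)-t\in T$. The entire difficulty, and the main obstacle, is therefore to establish the remaining right-hand $\star$-condition $t\star a=\lambda_t(a)-a\in T$ for all $t\in T$ and $a\in A$; equivalently, that $\lambda_t$ induces the identity on the torsion-free group $(A,+)/T$.

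This is where $\star$-hypercentrality and the torsion of $t$ combine. First, for every $a\in A$ the operator $\lambda_a-\mathrm{id}$ is locally nilpotent on $(A,+)$: since $(\lambda_a-\mathrm{id})(y)=a\star y$ and, by the defining property of the upper $\star$-central series, $y\in\zen{\beta+1}{A}$ forces $a\star y\in\zen{\beta}{A}$, each application of $\lambda_a-\mathrm{id}$ strictly lowers the least ordinal $\beta$ with $y\in\zen{\beta}{A}$ (for $y\neq 0$ this least $\beta$ is a successor), so by well-foundedness and the equality $A=\zefin{A}$ we get $(\lambda_a-\mathrm{id})^n(y)=0$ for some finite $n=n(y)$. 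Second, each $t\in T$ has finite multiplicative order: as $\br{t}\subseteq T$, the powers $t^n=\sum_{j=0}^{n-1}\lambda_t^{\,j}(t)$ all lie in the additive subgroup generated by the finite set $\{(\lambda_t-\mathrm{id})^j(t):j\geq 0\}$, a finitely generated torsion group and hence finite, so $\{t^n\}$ is finite; consequently $\lambda_t^{\,k}=\lambda_{t^k}=\mathrm{id}$ for some $k$. The induced automorphism $\overline{\lambda_t}$ of $(A,+)/T$ is thus simultaneously locally unipotent and of finite order. Applying this to the $\overline{\lambda_t}$-invariant, finitely generated subgroup generated by the iterates $\overline{\lambda_t}^{\,i}(v)$ of an arbitrary $v$, we have there a unipotent automorphism of finite order of a torsion-free abelian group, which is necessarily the identity (in characteristic zero the only unipotent element of finite order is $1$). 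Hence $\overline{\lambda_t}=\mathrm{id}$, giving $t\star a\in T$; this completes the proof that $T$ is an ideal, and with it the corollary.
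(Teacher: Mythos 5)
Your proof is correct, and its second half coincides with the paper's: the paper also settles the corollary by first noting that $T$ is an ideal and then applying Theorem~\ref{thma} (equivalently Corollary~\ref{cor1}) to $A/T$; the verifications you spell out --- that $A/T$ is again a $\star$-hypercentral T-brace with torsion-free additive group --- are exactly what the paper leaves implicit in the phrase ``we can then apply Theorem~\ref{thma} to the factor brace $A/T$''. Where you genuinely diverge is on the first assertion. The paper obtains ``$T$ is an ideal'' by citing Theorem~\ref{t:marco}, using the fact recorded in Section~\ref{s:smok} that every $\star$-hypercentral brace is locally Smoktunowicz-nilpotent; you instead prove it from scratch: $\la_a-\mathrm{id}$ is locally nilpotent on $(A,+)$ because each application strictly lowers the least ordinal $\al$ with $y\in\zen{\al}{A}$; consequently each $t\in T$ has finite multiplicative order, since the powers $t^n=\sum_{j=0}^{n-1}\la_t^{\,j}(t)$ lie in the finite subgroup generated by the finitely many torsion elements $(\la_t-\mathrm{id})^{\,j}(t)$ (the binomial expansion of $\la_t^{\,j}$ being legitimate in the endomorphism ring of $(A,+)$); hence $\la_t$ induces on the torsion-free group $(A,+)/T$ an automorphism that is simultaneously locally unipotent and of finite order, and such an automorphism is the identity, since on a finitely generated invariant subgroup $(I+N)^k=I$ with $N$ nilpotent forces $N\bigl(kI+\binom{k}{2}N+\cdots\bigr)=0$ with the second factor invertible, so $N=0$. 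All of these steps check out. The trade-off: the paper's citation is shorter and Theorem~\ref{t:marco} also supplies the finer torsion and Sylow structure that the paper needs later (e.g.\ in the proof of Theorem~\ref{thmb}); your argument is self-contained, relies only on the definition of the upper $\star$-central series and Proposition~\ref{p:p1}, and in passing re-proves, for $\star$-hypercentral braces, the fragment of Theorem~\ref{t:marco}(i) asserting that additive torsion elements are torsion in $(A,\cdot)$. (You are also right that non-periodicity plays no role beyond making the conclusion non-vacuous.)
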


We shall also prove the following result.

\begin{thm}\label{thmb}
Let $A$ be a T-brace. Suppose that $A$ is Smoktunowicz-nilpotent and that $\text{zl}(A)=k$ for some positive integer $k$. If the additive group of $A$ is not periodic, then the Sylow $p$-subgroup of $K=A\star A$ is finite of order at most $p^{k-1}$ for every prime $p$.
\end{thm}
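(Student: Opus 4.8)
The plan is to reduce the statement to a single prime and then to exploit the transitivity of the ideal relation along the upper $\star$-central series, in imitation of the classical fact that a nilpotent T-group is a Dedekind group.

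First I would dispose of the ambient torsion. Since $A$ is Smoktunowicz-nilpotent it is $\star$-hypercentral, and it is non-periodic by hypothesis, so Corollary~\ref{cor3} applies: the torsion subgroup $T$ of $(A,+)$ is an ideal and $A/T$ is abelian. Consequently $K=A\star A\leq T$, so $(K,+)$ is periodic and splits as the direct sum of its primary components $K=\bigoplus_{q}K_{q}$. Fixing a prime $p$, I would check that the $p'$-part $\bigoplus_{q\neq p}K_{q}$ is again an ideal of $A$ (each additive primary component is $\lambda_a$-invariant, and every right $\star$-product lies in $A\star A\leq T$, so the verification reduces to the fact that these products preserve additive order), and that quotients of T-braces are T-braces by transitivity of the ideal relation. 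The quotient of $A$ by this $p'$-part is then a non-periodic T-brace whose upper $\star$-central series has length at most $k$ and in which $K$ has become the $p$-group $K_{p}$. Passing to it, I may assume $K=K_{p}$ and it suffices to prove $|K|\leq p^{k-1}$.

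Next I would set up the working series. From the defining property of the upper $\star$-central series one has $\zen{m+1}{A}\star A\leq\zen{m}{A}$ for every $m$, and an easy induction then gives $A^{(j+1)}\leq\zen{k-j}{A}$ for all $j$, where we use $\zen{k}{A}=A$ because $\zl{A}=k$. In particular $A^{(k+1)}=0$ and
\[
K=A\star A=A^{(2)}\leq\zen{k-1}{A}.
\]
Intersecting the upper $\star$-central series with $K$ therefore yields a chain of ideals of $A$,
\[
0=K\cap\zen{0}{A}\leq K\cap\zen{1}{A}\leq\dots\leq K\cap\zen{k-1}{A}=K,
\]
of length $k-1$. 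It now suffices to prove that each factor $F_{i}=(K\cap\zen{i+1}{A})/(K\cap\zen{i}{A})$ is cyclic of order at most $p$, for then $|K|=\prod_{i}|F_{i}|\leq p^{k-1}$, and finiteness is automatic.

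The heart of the matter, and the step I expect to be the main obstacle, is the claim that each $F_{i}$ is cyclic of order at most $p$. The guiding principle is that $\star$-nilpotency should force every subbrace of $A$ to be a subideal, fitting into a finite chain of ideals reaching $A$ (the brace analogue of subgroups being subnormal in a nilpotent group, obtained by combining the subbrace with the terms $\zen{i}{A}$); the T-property then upgrades every such subbrace to an ideal. Working inside the section controlling $F_{i}$, namely after passing to $A/\zen{i}{A}$ and identifying $F_{i}$ with its image in $\zet{A/\zen{i}{A}}$, I would argue that a factor of additive rank at least two, or of exponent at least $p^{2}$, produces a proper additive subgroup that is a subideal but not an ideal, contradicting transitivity. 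The delicate point is that central subgroups are automatically ideals, so the contradiction cannot be read off from $F_{i}$ in isolation; it must be generated by coupling $F_{i}$ with an element $u$ of infinite additive order, which non-periodicity supplies. The crux of the whole theorem is precisely this rigidity: the power automorphism that $u$ induces on the $p$-section is forced to be trivial, since an infinite-order element cannot act as a non-trivial power map compatibly with the ideal structure, and this is what collapses each $F_{i}$ to a cyclic group of order at most $p$. Assembling these bounds along the length-$(k-1)$ series then gives that the Sylow $p$-subgroup of $K$ is finite of order at most $p^{k-1}$, as required.
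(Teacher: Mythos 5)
Your reduction to a single prime and your chain setup are sound (with one adjustment: rather than factoring out the $p'$-part of $K$ alone, it is cleaner to factor out the $p'$-part $Q_p$ of the whole torsion ideal $T$ — each primary component of $T$ is an ideal by Theorem~\ref{t:marco}, so $Q_p$ is too, and then \emph{all} torsion in the quotient, in particular the torsion of the $\star$-center, becomes $p$-primary, which any detailed argument will need). Likewise $K=A^{(2)}\leq\zen{k-1}{A}$ and the length-$(k-1)$ chain $K\cap\zen{i}{A}$ are correct. This matches the skeleton of the paper's proof, which also reduces modulo $Q_p$ and also produces a chain of $k-1$ ideals with factors of order at most $p$.

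The genuine gap is that your central claim — each factor $F_i=(K\cap\zen{i+1}{A})/(K\cap\zen{i}{A})$ is cyclic of order at most $p$ — is asserted, not proved, and the heuristic you offer (subbraces are subideals, hence ideals, hence an infinite-order element induces a ``power automorphism'' that must be trivial) is not an argument; as you yourself note, every additive subgroup of a $\star$-central section is automatically an ideal of the corresponding quotient, so no contradiction can be extracted at the level where you are working without substantial new input. That missing input is precisely the content of the paper's Section~4: for $a\in\zen{2}{A}$ of infinite additive order, $\br{a}=\cyclic{a}\oplus\cyclic{a\star a}$ and the T-property applied to ideals like $\br{p^la}$ forces $a\star a$ to have order at most $p$ (Lemma~\ref{c:c212}, with Corollary~\ref{c:c213} handling finite-order $a$ by translating by an infinite-order central element); all products $x\star a,\ a\star x$ lie in $\cyclic{a\star a}$ (Lemmas~\ref{l:l28}, \ref{l:l29}); and any two such nonzero cyclic groups coincide (Lemma~\ref{l:l210}). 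Together these give Lemma~\ref{l:l214}: $L=\zen{2}{A}\star\zen{2}{A}$ is a single ideal of order $p$ with $\zen{2}{A}/L\leq\zet{A/L}$, and the paper then inducts on $\zl{A}$ by quotienting by $L$ (Corollary~\ref{c:c215}) — a chain built from successive ideals $\zen{2}{\cdot}\star\zen{2}{\cdot}$, not from intersections with the upper central series. Note also that your per-factor bound on the intersected chain, while plausible, is not obviously provable except as a consequence of the theorem itself; so as written the proposal assumes the hard part rather than proving it.
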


The layout of our paper is as follows. In Section~\ref{s:smok} we give some results that we shall require later and also give a brief overview of some results connected with $\star$-nilpotent and $\star$-hypercentral braces. In Section~\ref{s:torfree} we state and prove the results which enable us to prove Theorem~\ref{thma} and in Section~\ref{s:nonper} we prove   pertinent results for Theorem~\ref{thmb}.

 \section{Preliminary results and an overview of $\star$-hypercentral braces}\label{s:smok}
 
 We shall need some of the following properties of $\star$ and $\lambda_a$ whose proofs can be found in \cite{fC18} or \cite{JVV22}.

\begin{proposition}\label{p:p1}
Let $A$ be a left brace.  Then
\begin{enumerate}
\item[  (i)] $ab^{-1}=-\la_{ab^{-1}}(b) +a, a-b=\la_b(b^{-1})a, a+b=a \la_{a^{-1}}(b)$;
\item[ (ii)] $a\star (b+c)=a\star b+ a\star c$;
\item[(iii)] $(ab)\star c=a\star (b\star c)+b\star c+ a\star c$;
\item[ (iv)] $(a+b)\star c=a\star (\lambda_{a^{-1}}(b)\star c) +(\lambda_{a^{-1}}(b)\star c)+a\star c$;
\item[  (v)] $\la_a(x+y)=\la_a(x)+\la_a(y), (\la_a\circ \la_b)(x)=\la_{ab}(x)$;
\item[ (vi)] $\lambda_y(b\star a)=yby^{-1}\star \lambda_y(a)$;
\item[(v)] $yby^{-1}=\lambda_y(\lambda_b(y^{-1}) -y^{-1} +b)=\lambda_y(b\star y^{-1} +b)$
\end{enumerate}
for all elements $a,b,c,x,y\in A$.
\end{proposition}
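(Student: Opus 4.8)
The plan is to derive every identity by direct computation from the axioms LB1--LB3 together with the two definitions $\lambda_a(x)=ax-a$ and $a\star b=\lambda_a(b)-b$, leaning on two facts only: the additivity of each $\lambda_a$, already recorded in the introduction where $\lambda_a$ is noted to be an automorphism of $(A,+)$, and the composition law $\lambda_{ab}=\lambda_a\circ\lambda_b$. First I would record the elementary consequences of the axioms that get used repeatedly: that the common neutral element satisfies $a\cdot 0=a$ and $a^{-1}a=aa^{-1}=0$; that $ab=a+\lambda_a(b)$ straight from the definition of $\lambda_a$; and the subtraction rule $a(x-y)=ax-ay+a$, obtained from LB3 by writing $x-y=x+(-y)$ and computing $a(-y)=2a-ay$ out of $a(y+(-y))=a\cdot 0=a$. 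These are the only points at which the two operations interact.

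I would then establish the composition law of part~(v), the one genuinely structural ingredient, by a single computation using associativity of multiplication and additivity of $\lambda_a$:
\[
\lambda_a(\lambda_b(x))=\lambda_a(bx-b)=\lambda_a(bx)-\lambda_a(b)=\bigl(a(bx)-a\bigr)-\bigl(ab-a\bigr)=(ab)x-ab=\lambda_{ab}(x).
\]
Part~(ii) then follows from additivity alone, since $a\star(b+c)=\lambda_a(b+c)-(b+c)=\bigl(\lambda_a(b)-b\bigr)+\bigl(\lambda_a(c)-c\bigr)$.

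For part~(i) I would compute $a\,\lambda_{a^{-1}}(b)=a(a^{-1}b-a^{-1})=a(a^{-1}b)-a(a^{-1})+a=b-0+a=a+b$ using the subtraction rule together with $a(a^{-1}b)=b$ and $aa^{-1}=0$; the other two identities of~(i) are short computations of the same type, using $bb^{-1}=0$ and $a\cdot 0=a$. Part~(iii) I would obtain by writing $(ab)\star c=\lambda_{ab}(c)-c=\lambda_a(\lambda_b(c))-c$, substituting $\lambda_b(c)=(b\star c)+c$, expanding via additivity and $\lambda_a(u)=(a\star u)+u$, and cancelling. Part~(iv) comes for free: substitute $a+b=a\,\lambda_{a^{-1}}(b)$ from~(i) into the left-hand side and apply~(iii) with $b$ replaced by $\lambda_{a^{-1}}(b)$.

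Finally the two conjugation formulas. For~(vi), expanding both sides through $u\star v=\lambda_u(v)-v$ and the composition law gives $\lambda_{yb}(a)-\lambda_y(a)$ on the left and $\lambda_{yby^{-1}}(\lambda_y(a))-\lambda_y(a)=\lambda_{(yby^{-1})y}(a)-\lambda_y(a)=\lambda_{yb}(a)-\lambda_y(a)$ on the right, so the two coincide. For the last identity, the equality of its two right-hand expressions is merely the definition of $\star$; expanding $\lambda_y$ additively and inserting $\lambda_{yb}(y^{-1})=yby^{-1}-yb$, $\lambda_y(y^{-1})=-y$ and $\lambda_y(b)=yb-y$ collapses the sum to $yby^{-1}$. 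I do not anticipate a genuine obstacle: once the composition law is in hand the whole proposition is routine, and the only real care needed is bookkeeping---keeping multiplicative and additive operations distinct and tracking the corrective $+a$ that the subtraction rule contributes whenever a difference is multiplied on the left.
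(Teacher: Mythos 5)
Your proposal is correct, but note that the paper itself gives no proof of this proposition at all: it simply states that the proofs ``can be found in \cite{fC18} or \cite{JVV22}.'' So your direct verification from LB1--LB3 and the definitions of $\lambda_a$ and $\star$ is supplying exactly what the paper delegates to the literature, and it is essentially the standard argument found there: establish additivity of $\lambda_a$ and the composition law $\lambda_a\circ\lambda_b=\lambda_{ab}$, then reduce every identity to bookkeeping. All of your computations check out, including the subtraction rule $a(x-y)=ax-ay+a$, the derivation of (iii) from the composition law, the deduction of (iv) from (i) and (iii), and the collapse of the two conjugation formulas. One caution on part (i): the second identity as printed, $a-b=\la_b(b^{-1})a$, is false if read literally as $\bigl(\la_b(b^{-1})\bigr)\cdot a=(-b)\cdot a$ (for instance in the brace coming from the radical ring $2\mathbb{Z}/8\mathbb{Z}$ one gets $(-2)\cdot 2=4\neq 0=2-2$); it must be read as $a-b=\la_b(b^{-1}a)$, which is immediate since $\la_b(b^{-1}a)=b(b^{-1}a)-b=a-b$. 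You gloss over this identity as ``a short computation of the same type,'' so make sure the computation you have in mind uses the correct parsing; with that reading, your proof is complete.
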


We note,  in particular, that Proposition~\ref{p:p1}(iii) shows that if $c\in\zen{2}{A}$, then the map $\be_c: (A,\cdot)\longrightarrow (\zet{A},+)$ defined by $\be_c(x)=x\star c$ is a group homomorphism.

We shall also require the following result, which is well-known and its proof is omitted.

\begin{lemma}\label{l:l1}
Let $n,k$ be natural numbers and let $a, b_1,b_2,\dots, b_n, c_1,c_2,\dots, c_k$ be elements of the  left brace $A$.  Then 
\[
a(b_1+\dots+ b_n-c_1-\dots - c_k)=ab_1+ab_2+\dots +ab_n-ac_1-\dots -ac_k +(k-n+1)a.
\]
\end{lemma}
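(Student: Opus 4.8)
The plan is to reduce everything to two applications of axiom LB3 together with induction, managing the additive ``correction terms'' (the multiples of $a$) carefully. First I would record two elementary identities. The base identity is the pure-sum formula
\[
a(x_1+\dots+x_m)=ax_1+\dots+ax_m-(m-1)a,
\]
valid for all $m\geq 1$ and all $x_i\in A$. This follows by induction on $m$: the case $m=1$ is trivial, and the inductive step is a single application of LB3 to $a\bigl((x_1+\dots+x_m)+x_{m+1}\bigr)$, which introduces exactly one further $-a$ and so turns $-(m-1)a$ into $-ma$. The second identity is the single-subtraction formula $a(x-c)=ax-ac+a$, obtained by applying LB3 to the decomposition $x=(x-c)+c$, which gives $ax=a(x-c)+ac-a$, and then solving for $a(x-c)$; here the abelian additive structure of $(A,+)$ guaranteed by LB1 is what lets us rearrange freely.

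With these in hand I would prove the stated formula by induction on $k$. The case $k=0$ is exactly the pure-sum formula above, since $-(n-1)a=(0-n+1)a$. For the inductive step, set $D=b_1+\dots+b_n-c_1-\dots-c_k$ and apply the single-subtraction identity to obtain $a(D-c_{k+1})=aD-ac_{k+1}+a$. Substituting the inductive hypothesis for $aD$ and collecting the multiples of $a$ converts the correction term $(k-n+1)a+a$ into $\bigl((k+1)-n+1\bigr)a$, which is precisely the claim for $k+1$. An equivalent route, which I would note as an alternative, is to write the argument as the $(n+k)$-fold sum $b_1+\dots+b_n+(-c_1)+\dots+(-c_k)$, apply the pure-sum formula once to get the coefficient $-(n+k-1)a$, and then substitute $a(-c_i)=2a-ac_i$ for each $i$ (the case $x=0$ of the subtraction identity, using that $a\cdot 0=a$ because $0$ is the multiplicative identity); the bookkeeping $2k-(n+k-1)=k-n+1$ yields the same coefficient of $a$.

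I do not expect a genuine obstacle here beyond this bookkeeping: the entire content is the tracking of the coefficient of $a$, since each use of LB3 contributes one $-a$ while each negation contributes $+2a$, and it is the cancellation of these two contributions that produces the net coefficient $k-n+1$. Because $(A,+)$ is abelian the summands may be reordered at will, so no subtlety about the ordering of the $b_i$ or the $c_j$ arises, and the two single-step identities do all the real work.
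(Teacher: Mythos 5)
Your proof is correct and complete: the inner induction giving $a(x_1+\dots+x_m)=ax_1+\dots+ax_m-(m-1)a$, the single-subtraction identity $a(x-c)=ax-ac+a$ obtained by applying LB3 to $x=(x-c)+c$, and the outer induction on $k$ (as well as your alternative route via $a(-c_i)=2a-ac_i$, which uses $a\cdot 0=a$ since the additive and multiplicative identities coincide) all check out, with the coefficient bookkeeping landing exactly on $(k-n+1)a$. The paper states this lemma as well-known and omits its proof entirely, so there is no argument to compare against; your iterated application of LB3 with induction is precisely the routine verification the authors left to the reader.
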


The following result is also quite easy.

\begin{lemma}\label{l:l3}
Let $A$ be a left brace and let $k$ be an integer. Then 
\begin{enumerate}
\item[ (i)] $x\star ka=k(x\star a)$ for all $a,x\in A$;
\item[(ii)] If $a\in\zen{2}{A}$, then $(ka)\star x=k(a\star x)$ for all $x\in A$. 
\end{enumerate}
\end{lemma}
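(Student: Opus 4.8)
The plan is to treat the two parts separately; the first is immediate and the second uses the hypothesis $a\in\zen{2}{A}$ in an essential way.

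For Lemma~\ref{l:l3}(i) I would use only the additivity of $\star$ in its second argument. By Proposition~\ref{p:p1}(ii) the map $y\mapsto x\star y$ is an endomorphism of $(A,+)$, so a routine induction gives $x\star(na)=n(x\star a)$ for every positive integer $n$. To reach arbitrary integers $k$ I would note that $x\star 0=\la_x(0)-0=0$, since $\la_x$ is an additive automorphism; hence $x\star a+x\star(-a)=x\star(a-a)=0$, so $x\star(-a)=-(x\star a)$. Combining these yields $x\star ka=k(x\star a)$ for all integers $k$, with no restriction on $a$ or $x$.

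The delicate part is Lemma~\ref{l:l3}(ii), since $\star$ is \emph{not} additive in its first argument; the correct expansion of $(a+b)\star c$ is the one in Proposition~\ref{p:p1}(iv), which carries two correction terms. The reduction I would make is the auxiliary claim that for all $a,c\in A$ and all $b\in\zen{2}{A}$ one has $(a+b)\star c=a\star c+b\star c$. Granting this, part (ii) follows by induction on $|k|$: since $\zen{2}{A}$ is an additive subgroup we have $(k-1)a\in\zen{2}{A}$, so writing $ka=(k-1)a+a$ gives $(ka)\star c=k(a\star c)$ for positive $k$; the case $k=0$ is $0\star c=0$, and negative $k$ follows from $(a+(-a))\star c=0$ exactly as in part (i).

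To establish the claim I would apply Proposition~\ref{p:p1}(iv) with $u=\la_{a^{-1}}(b)$, obtaining
\[
(a+b)\star c=a\star(u\star c)+(u\star c)+a\star c .
\]
Because the terms of the upper $\star$-central series are left ideals, $u=\la_{a^{-1}}(b)\in\zen{2}{A}$, whence $u\star c\in\zet{A}$ by definition of the series; being $\star$-central, $u\star c$ is annihilated on both sides, so the first summand $a\star(u\star c)$ vanishes. It then remains to identify $u\star c$ with $b\star c$. For this I would write $u=\la_{a^{-1}}(b)=b+(a^{-1}\star b)$, observe that $a^{-1}\star b\in\zet{A}$ (since $b\in\zen{2}{A}$ forces $y\star b\in\zet{A}$ for every $y$), and use the general fact that adding a $\star$-central element $v$ to the first argument leaves a $\star$-product unchanged: applying Proposition~\ref{p:p1}(iv) to $(b+v)\star c$, its correction terms involve $\la_{b^{-1}}(v)\in\zet{A}$ and therefore vanish, so $(b+v)\star c=b\star c$. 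This gives $u\star c=b\star c$ and the claim. I expect the main obstacle to be the bookkeeping in Proposition~\ref{p:p1}(iv): one must verify that each correction term lands in $\zet{A}$, where it is killed, and it is precisely the second-centrality of $b$ that guarantees this.
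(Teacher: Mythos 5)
Your proposal is correct, but for part (ii) it takes a genuinely different route from the paper's. (Part (i) is the same in both: additivity of $\star$ in the second argument from Proposition~\ref{p:p1}(ii), plus the trivial handling of $k\leq 0$.) For part (ii) the paper works \emph{multiplicatively}: since the quotient brace $\zen{2}{A}/\zet{A}$ is abelian, addition and multiplication coincide modulo $\zet{A}$, so $ka=a^k u$ for some $u\in\zet{A}$; it then expands $(a^k u)\star x$ and $a^k\star x$ by induction using Proposition~\ref{p:p1}(iii), the cross terms $a\star(a^{k-1}\star x)$ and $a^k\star(u\star x)$ vanishing because $a^{k-1}\star x, u\star x\in\zet{A}$, and gets negative $k$ from $0=(aa^{-1})\star x$. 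You instead stay entirely \emph{additive}: your key claim $(a+b)\star c=a\star c+b\star c$ for $b\in\zen{2}{A}$, proved from Proposition~\ref{p:p1}(iv) by showing each correction term $\la_{a^{-1}}(b)\star c$, $\la_{b^{-1}}(v)\star c$ lies in (or is killed by) $\zet{A}$, followed by induction on $|k|$. Both proofs hinge on the identical phenomenon --- a $\star$-product with one factor in $\zen{2}{A}$ lands in $\zet{A}$, where it is annihilated on both sides --- and all your individual steps check out (in particular $\la_{a^{-1}}(b)\in\zen{2}{A}$ because ideals are $\la$-invariant, and $\la_{b^{-1}}(v)=v$ for $v\in\zet{A}$). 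What each buys: your route avoids invoking the coincidence of additive and multiplicative cosets modulo $\zet{A}$ and produces a clean, reusable restricted left-additivity statement for $\star$; the paper's route is shorter because identity (iii) has simpler correction terms than (iv) and the passage to $a^k$ makes the induction one line.
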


\begin{proof}
 (i) is immediate from Proposition~\ref{p:p1}. 

(ii) Let $k$ be a positive integer and let $Z=\zet{A}$. Since $\zen{2}{A}/Z$ is abelian it follows that $ka+Z=a^k+Z=a^kZ$ so that $ka=a^ku$ for some element $u\in Z$. Since $u\star x=0$ and $a\star x\in Z$, Proposition~\ref{p:p1}(iii) implies that 
\[
ka\star x=a^k\star x= k(a\star x),
\]
using a further induction argument. Also
\[
0=0\star x=1\star x=aa^{-1}\star x=a\star (a^{-1}\star x)+a\star x+ a^{-1}\star x= a\star x+a^{-1}\star x,
\]
upon again using Proposition~\ref{p:p1} and the fact that $a^{-1}\in \zen{2}{A}$. Thus $a^{-1}\star x=-(a\star x)$ and a further easy induction completes the proof.
\end{proof}

The next result  is also probably well-known but we give a proof for the sake of completeness. We shall throughout let $\cyclic{a}$ denote the additive cyclic subgroup generated by the element $a$ in the brace $A$, although often we shall explicitly state our meaning again.

\begin{lemma}\label{l:extra1}
Let $A$ be a left brace and let $a\in A$.
\begin{enumerate}
\item[  (i)] If $a\star a=0$, then  $\br{a}=\cyclic{a}$ 
and moreover the multiplication in $\br{a}$ coincides with addition;
\item[ (ii)] If $a\in \zet{A}$, then 
$\cyclic{a}$ is an ideal of $A$;
\item[(iii)] If $a\in\zen{2}{A}$, then $\br{a}=\cyclic{a}+\cyclic{a\star a}$.
\end{enumerate}
\end{lemma}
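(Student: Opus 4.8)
The three parts are all handled by direct computation with the $\star$-operation and the maps $\la_a$, leaning on Proposition~\ref{p:p1} and Lemma~\ref{l:l3}. For~(i), I would first rewrite the hypothesis $a\star a=0$ as $\la_a(a)=a$, using $a\star a=\la_a(a)-a$. Since $\la_a$ is an additive automorphism it is $\mathbb Z$-linear, so $\la_a(na)=n\la_a(a)=na$ for every integer $n$; that is, $\la_a$ fixes $\cyclic a$ pointwise. From $ax=\la_a(x)+a$ I then get $a\cdot(na)=na+a=(n+1)a$ for all integers $n$, so left multiplication by $a$ carries $\cyclic a$ bijectively to itself by the shift $na\mapsto(n+1)a$; its inverse, left multiplication by $a^{-1}$, therefore restricts to the shift $na\mapsto(n-1)a$ on $\cyclic a$. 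An induction in both directions starting from $a^0=0$ then yields $a^k=ka$ for every integer $k$, whence the multiplicative cyclic group generated by $a$ coincides as a set with $\cyclic a$, and $(ka)(la)=a^{k+l}=(k+l)a$ shows that multiplication agrees with addition there. Thus $\cyclic a$ is a subbrace, and since it lies inside every subbrace containing $a$, we conclude $\br a=\cyclic a$.

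For~(ii), the point is that $a\in\zet A$ forces the whole additive subgroup $\cyclic a$ into $\zet A$, because $\zet A$ is an additive subgroup. Any additive subgroup $L\le\zet A$ is automatically an ideal: for $z,w\in L$ one has $z\star w=0$, so $zw=z+w\in L$ and $L$ is a subbrace, while for $z\in L$ and $x\in A$ both $x\star z$ and $z\star x$ vanish and hence lie in $L$. Applying this with $L=\cyclic a$ gives the claim; alternatively one may invoke Lemma~\ref{l:l3} to see $(na)\star x=0$ and $x\star(na)=0$ directly.

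Part~(iii) is where the real work lies, and I expect it to be the main obstacle. Put $c=a\star a$; taking the second argument to be $a$ in the definition of $\zen{2}{A}$ shows $c\in\zet A$, and I set $B=\cyclic a+\cyclic c$. Any subbrace containing $a$ must contain $a\star a=c$ and is additively closed, so $B\subseteq\br a$, and it remains to prove that $B$ is a subbrace. The delicate step is evaluating $(ma+nc)\star(m'a+n'c)$, since $\star$ need not be additive in its left argument. Right-additivity (Proposition~\ref{p:p1}(ii)) together with Lemma~\ref{l:l3}(i) and $u\star c=0$ reduces the product to $m'\bigl((ma+nc)\star a\bigr)$; to deal with the left argument I would use that $nc\in\zet A$, so $ma+nc=(ma)(nc)$ and Proposition~\ref{p:p1}(iii) collapses $(ma+nc)\star a$ to $(ma)\star a=m(a\star a)=mc$ by Lemma~\ref{l:l3}(ii). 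This gives $(ma+nc)\star(m'a+n'c)=mm'c$ and hence the explicit product
\[
(ma+nc)(m'a+n'c)=(m+m')a+(n+n'+mm')c\in B.
\]
The same formula exhibits the inverse of $ma+nc$ as $(-m)a+(m^2-n)c\in B$, so $B$ is a multiplicative subgroup and therefore a subbrace; minimality then yields $\br a=B=\cyclic a+\cyclic{a\star a}$. The one thing to watch is that the collapse of the left argument genuinely needs $a\in\zen{2}{A}$, so that $c$ is central and Lemma~\ref{l:l3}(ii) applies; without this hypothesis the correction terms would not reduce so cleanly.
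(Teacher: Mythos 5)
Your proof is correct, and its interesting divergence from the paper is in part (iii). For parts (i) and (ii) you follow essentially the paper's own route: in (i) both arguments come down to proving $a^k=ka$ for all integers $k$ (the paper extracts this from Lemma~\ref{l:l1}, you from the $\mathbb{Z}$-linearity of $\la_a$ together with $ax=\la_a(x)+a$; these are equivalent bookkeeping devices), and in (ii) both observe that every element of $\cyclic{a}$ $\star$-annihilates everything, so the ideal conditions are vacuous. In (iii), however, the paper argues by quotient: since $u=a\star a\in\zet{A}$, part (ii) makes $\cyclic{u}$ an ideal, the image of $a$ in $A/\cyclic{u}$ satisfies $(a+\cyclic{u})\star(a+\cyclic{u})=0$, so part (i) applied in the quotient shows that $(\cyclic{a}+\cyclic{u})/\cyclic{u}$ is a subbrace, and pulling back along the projection gives that $\cyclic{a}+\cyclic{u}$ is a subbrace of $A$ containing $a$. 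You instead verify closure directly, and your computation is sound: the collapse $(ma+nc)\star(m'a+n'c)=mm'c$ uses exactly the right ingredients (right additivity, Lemma~\ref{l:l3}(i), centrality of $c$, and Proposition~\ref{p:p1}(iii) applied after rewriting $ma+nc=(ma)(nc)$), and you correctly flag that the step $(ma)\star a=m(a\star a)$ is where $a\in\zen{2}{A}$ is genuinely needed, via Lemma~\ref{l:l3}(ii). What your route buys is the explicit multiplication law $(ma+nc)(m'a+n'c)=(m+m')a+(n+n'+mm')c$ on $\br{a}$, which lays bare the structure of a one-generator subbrace of $\zen{2}{A}$ and subsumes the closure assertions of (i) and (ii) as the degenerate cases $c=0$ and $a$ central; what the paper's route buys is brevity and a reduction pattern (pass to a quotient by a central cyclic ideal, apply the previous case, pull back) that the authors reuse almost verbatim in the induction proving Lemma~\ref{l:extra2}.
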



\begin{proof}
(i) Since $a\star a=0$ it follows that $a^2=2a$. Using this fact and Lemma~\ref{l:l1} a simple induction argument allows us to conclude that $a^n=na$ for all natural numbers $n$. Furthermore, we have
\[
a=a^{-1}a^2=a^{-1}(a+a)=1+1-a^{-1}=-a^{-1}
\]
and a further straightforward induction implies that $a^{-n}=-na$ for all positive integers $n$.  Also for all integers $m,n$ we have $(ma)(na)=a^ma^n=a^{m+n}=(m+n)a$ so 
$\cyclic{a}$ is closed under multiplication. Additionally,
\[
(na)^{-1}=(a^n)^{-1}=a^{-n}= -na
\]
so 
$\cyclic{a}$ is also  a multiplicative group. Thus $\cyclic{a}$ is the subbrace generated by $a$ and the addition and multiplication on $\cyclic{a}$ coincide.

(ii) By (i), $\cyclic{a}$ is a subbrace of $A$. Moreover, if $b\in A$, then Lemma~\ref{l:l3} implies that $na\star b=b\star na=0\in\cyclic{a}$ for all integers $n$ so the result is immediate.

(iii) If $a\in \zen{2}{A}$, then $u=a\star a\in\zet{A}$ and hence $\cyclic{u}$ is an ideal of $A$ and $(a+\cyclic{u})\star (a+\cyclic{u})=0$. By (i) it follows that $\br{a+\cyclic{u}}=\cyclic{a+\cyclic{u}}=(\cyclic{a}+\cyclic{u})/\cyclic{u}$. Hence $\cyclic{a}+\cyclic{u}$ is a subbrace of $A$ containing $a$ so $\br{a}\leq \cyclic{a}+\cyclic{u}$. Since  it is clear that $a, u\in \br{a}$ we have $\br{a}=\cyclic{a}+\cyclic{u}$ as required.
\end{proof}

We shall require some knowledge of $\star$-hypercentral braces and the next few results incorporate the results we need. First we give an important property of the upper $\star$-hypercenter, which is analogous to the well-known result in group theory. This result appears as \cite[Proposition 2.10]{BJ21}. There is a similar result in \cite[Proposition 2.26]{CSV19} and \cite[Lemma 4.9]{BEFPT25}.

\begin{proposition}\label{p:prop4}
Let $A$ be a left brace and suppose that $K$ is a non-trivial ideal of $A$ such that $K\leq \zeta_{\infty}(\star,A)$. Then $K\cap \zeta(\star,A)$ is non-trivial.
\end{proposition}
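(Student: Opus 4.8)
The plan is to imitate the classical group-theoretic argument that a nontrivial normal subgroup contained in the hypercentre meets the centre nontrivially. First I would consider the collection of ordinals $\alpha$ for which $K\cap\zen{\alpha}{A}\neq 0$. Since $K\leq\zefin{A}=\zen{\gamma}{A}$ and $K$ is nontrivial, we have $K\cap\zen{\gamma}{A}=K\neq 0$, so this collection is nonempty; let $\alpha$ be its least element.

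Next I would rule out $\alpha$ being a limit ordinal. If it were, then $\zen{\alpha}{A}=\cup_{\mu<\alpha}\zen{\mu}{A}$, so any nonzero $x\in K\cap\zen{\alpha}{A}$ would already lie in some $\zen{\mu}{A}$ with $\mu<\alpha$, giving $K\cap\zen{\mu}{A}\neq 0$ and contradicting minimality. Hence $\alpha=\beta+1$ for some ordinal $\beta$, and by minimality $K\cap\zen{\beta}{A}=0$.

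The heart of the argument is then to pick any nonzero $x\in K\cap\zen{\beta+1}{A}$ and to show that in fact $x\in\zet{A}$. By definition of the upper $\star$-central series, $x+\zen{\beta}{A}$ lies in $\zet{A/\zen{\beta}{A}}$, so that $x\star a$ and $a\star x$ lie in $\zen{\beta}{A}$ for every $a\in A$. On the other hand, because $K$ is an ideal, $x\star a$ and $a\star x$ lie in $K$ for every $a\in A$. Combining these, $x\star a,\ a\star x\in K\cap\zen{\beta}{A}=0$ for all $a\in A$, which is precisely the statement that $x\in\zet{A}$. Since $x$ is a nonzero element of $K$, this yields $K\cap\zet{A}\neq 0$, as required.

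The argument is essentially formal once the definitions are unwound; the only place demanding care is the passage to the quotient $A/\zen{\beta}{A}$, where I need the elementary fact that the induced operation satisfies $\overline{x\star a}=\bar{x}\star\bar{a}$, so that $\bar{x}\in\zet{A/\zen{\beta}{A}}$ translates back into $x\star a,\ a\star x\in\zen{\beta}{A}$. The key structural input is the interplay between $K$ being an ideal, which keeps $x\star a$ and $a\star x$ inside $K$, and the centrality condition, which pushes these same elements into the previous term of the series; their intersection is forced to vanish by the minimal choice of $\alpha$, and this is exactly what collapses $x$ into the $\star$-centre.
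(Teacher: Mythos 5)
Your proof is correct. Note, however, that the paper does not actually prove this proposition at all: it simply cites it as known, namely \cite[Proposition 2.10]{BJ21} (with related statements in \cite[Proposition 2.26]{CSV19} and \cite[Lemma 4.9]{BEFPT25}). Your argument --- taking the least ordinal $\alpha$ with $K\cap\zeta_{\alpha}(\star,A)\neq 0$, ruling out the limit case, writing $\alpha=\beta+1$, and using that for nonzero $x\in K\cap\zeta_{\beta+1}(\star,A)$ the elements $x\star a$ and $a\star x$ lie both in $K$ (as $K$ is an ideal) and in $\zeta_{\beta}(\star,A)$ (by definition of the series), hence in $K\cap\zeta_{\beta}(\star,A)=0$ --- is the standard transfinite adaptation of the group-theoretic proof and supplies a complete, self-contained justification of exactly the kind the cited sources give.
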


Next we have (see \cite[Lemma 3.3]{DKS25}, for example).

 \begin{lemma}\label{l:hyperideal}
 Let $A$ be a left brace and let $S$ be a subbrace of $A$. For each ordinal $\al$, $S+\zen{\al}{A}$ is an ideal of $S+\zen{\al+1}{A}$.
 \end{lemma}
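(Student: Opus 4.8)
The plan is to reduce the statement to the base case $\al=0$ and then argue directly. Since the claim only concerns the two consecutive terms $\zen{\al}{A}$ and $\zen{\al+1}{A}$, no transfinite induction on $\al$ is needed. I would first pass to the quotient brace $\bar A=A/\zen{\al}{A}$, which is legitimate because each term $\zen{\al}{A}$ is an ideal of $A$. Under the natural epimorphism $\pi\colon A\to\bar A$, the image of $S+\zen{\al}{A}$ is the subbrace $\bar S=\pi(S)$, while the image of $S+\zen{\al+1}{A}$ is $\bar S+\zet{\bar A}$, since by definition $\zen{\al+1}{A}/\zen{\al}{A}=\zet{\bar A}$. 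By the correspondence theorem for the quotient by $\zen{\al}{A}$ (both subbraces contain $\zen{\al}{A}$), the relation ``is an ideal of'' is preserved and reflected, so it suffices to prove: for any subbrace $T$ of a brace $B$, $T$ is an ideal of $T+\zet{B}$.

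Write $Z=\zet{B}$. The first step is to record the defining properties of central elements: for $z\in Z$ and any $x\in B$ one has $z\star x=x\star z=0$, hence $\la_z=\mathrm{id}$, $\la_x(z)=z$, and $tz=t+z$ for every $t\in B$. Using $ab=a+\la_a(b)$ together with Proposition~\ref{p:p1}(v), these identities yield $\la_{t+z}=\la_{tz}=\la_t$. A short computation then gives $(t_1+z_1)(t_2+z_2)=t_1t_2+(z_1+z_2)\in T+Z$ and $(t+z)^{-1}=t^{-1}+(-z)\in T+Z$, so $T+Z$ is indeed a subbrace containing the subbrace $T$.

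Next I would verify the two ideal conditions for $b=t'+z'\in T+Z$ and $t\in T$. For $t\star b$ the argument is immediate from additivity of $\star$ in the second variable, Proposition~\ref{p:p1}(ii): since $t\star z'=0$ and $T$ is closed under $\star$, we get $t\star b=t\star t'+t\star z'=t\star t'\in T$. The expression $b\star t$ is the delicate one, because $\star$ is not additive in the first variable; here I would invoke Proposition~\ref{p:p1}(iv) with first summand $t'$ and second summand $z'$, obtaining
\[
(t'+z')\star t=t'\star(\la_{t'^{-1}}(z')\star t)+(\la_{t'^{-1}}(z')\star t)+t'\star t.
\]
Because $z'$ is central, $\la_{t'^{-1}}(z')=z'$ and $z'\star t=0$, so the first two summands vanish and $b\star t=t'\star t\in T$. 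This establishes that $T$ is an ideal of $T+Z$, hence the reduced statement.

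The main obstacle is precisely the asymmetry of $\star$: unlike the second variable, the first variable does not distribute over addition, and the correct expansion (Proposition~\ref{p:p1}(iv)) introduces a $\la_{a^{-1}}$-twist. The computation succeeds only because the central element $z'$ is fixed by every $\la_x$ and annihilates every element under $\star$, which forces the twisted cross-terms to collapse. A secondary point requiring care is the justification of the quotient reduction; if one prefers to avoid it, the same computation can be performed directly in $A$, replacing ``central'' by ``central modulo $\zen{\al}{A}$'' and using that $\zen{\al}{A}$ is an ideal to absorb the twisted terms $\la_{s^{-1}}(c)\star t$ and $s\star(\la_{s^{-1}}(c)\star t)$ into $\zen{\al}{A}\subseteq S+\zen{\al}{A}$ rather than making them vanish outright.
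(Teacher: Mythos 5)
Your proof is correct. Note that the paper itself gives no argument for this lemma --- it simply cites \cite[Lemma 3.3]{DKS25} --- so your write-up is a self-contained replacement rather than a variant of an in-paper proof. The two halves of your argument are both sound: the reduction to $\al=0$ works because, $\zen{\al}{A}$ being an ideal, additive and multiplicative cosets coincide, so the star operation on $A/\zen{\al}{A}$ is induced by that of $A$ and the correspondence between subbraces containing $\zen{\al}{A}$ and subbraces of the quotient preserves and reflects ideality; and the definition $\zen{\al+1}{A}/\zen{\al}{A}=\zeta(\star,A/\zen{\al}{A})$ applies verbatim at every ordinal (limit or not), so indeed no transfinite induction is needed. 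In the base case you correctly identified the only delicate point, namely that $\star$ is not additive in its first argument, and your use of Proposition~\ref{p:p1}(iv) with $\la_{t'^{-1}}(z')=z'$ and $z'\star t=0$ (together with $x\star 0=0$) collapses the twisted terms exactly as claimed; the verification that $T+\zet{B}$ is a subbrace via $\la_{t+z}=\la_{tz}=\la_t$ and $z^{-1}=-z$ is also complete. Your closing remark that one could instead work directly in $A$, absorbing the cross-terms into $\zen{\al}{A}$ because it is an ideal, is a legitimate alternative and is essentially the same computation with ``vanishes'' replaced by ``lies in the kernel.''
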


A brace $A$ is called \emph{locally Smoktunowicz-nilpotent} (or \emph{locally (centrally nilpotent)} or \emph{locally $\star$-nilpotent}) if every finitely generated subbrace of $A$ is Smoktunowicz-nilpotent. Such braces have been studied in \cite{BEFPT25} and \cite{DKS25}. Every $\star$-hypercentral brace is locally Smoktunowicz-nilpotent. Of particular interest here are the following two results 
which appear in \cite{DKS25}  and \cite{BEFPT25}. 
These results generalize results occurring in \cite{BJ21} and \cite{JVV22}. 


\begin{theorem}\label{t:cor49}
Let $A$ be a $\star$-hypercentral left brace. 
If the additive group  of $A$ is torsion-free, then the additive group of every factor of the upper $\star$-central series is torsion-free.
\end{theorem}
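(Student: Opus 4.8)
The plan is to prove the stronger statement that each term $\zen{\al}{A}$ is \emph{isolated} in the additive group, i.e. that $(A,+)/\zen{\al}{A}$ is torsion-free; the assertion about the factors is then immediate, since $\zen{\al+1}{A}/\zen{\al}{A}$ is an additive subgroup of the torsion-free group $(A,+)/\zen{\al}{A}$. I would establish isolation by transfinite induction on $\al$. The case $\al=0$ is the hypothesis that $(A,+)$ is torsion-free, and at a limit ordinal $\la$ isolation passes to $\zen{\la}{A}=\cup_{\mu<\la}\zen{\mu}{A}$ at once: if $na\in\zen{\la}{A}$ then $na$ already lies in some $\zen{\mu}{A}$ with $\mu<\la$, whence $a\in\zen{\mu}{A}$ by induction. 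The whole weight therefore falls on the successor step, which I would reduce to the following \textbf{core claim}: if $B$ is a torsion-free $\star$-hypercentral brace and $nb\in\zet{B}$ for some $b\in B$ and some integer $n\ge 1$, then $b\in\zet{B}$. Applying this to $B=A/\zen{\al}{A}$ (which is torsion-free by the inductive hypothesis and $\star$-hypercentral as a quotient of $A$), and noting that $\zet{B}=\zen{\al+1}{A}/\zen{\al}{A}$, shows that $\zen{\al+1}{A}$ is isolated.

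To prove the core claim I would first dispose of the ``right-hand'' half. Since $nb\in\zet{B}$ we have $x\star(nb)=0$ for every $x$; by Lemma~\ref{l:l3}(i) this equals $n(x\star b)$, and torsion-freeness forces $x\star b=0$ for all $x\in B$, that is, $\la_x(b)=b$ for every $x$. The point of this is a small but crucial trick: using Proposition~\ref{p:p1}(i) in the form $a+b=a\la_{a^{-1}}(b)$ together with $\la_{(kb)^{-1}}(b)=b$, an easy induction gives $nb=b^{n}$, so that the additive multiple coincides with the multiplicative power. Because $\la$ is a homomorphism $(B,\cdot)\to\aut(B,+)$ with $\la_{uv}=\la_u\la_v$, and because $nb\in\zet{B}$ gives $\la_{nb}=\mathrm{id}$, we obtain $(\la_b)^{n}=\la_{b^{n}}=\la_{nb}=\mathrm{id}$.

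Writing $\de=\la_b-\mathrm{id}$, so that $\de(x)=b\star x$ is an additive endomorphism of $(B,+)$ by Proposition~\ref{p:p1}(ii), the identity $(\mathrm{id}+\de)^{n}=\mathrm{id}$ expands to $\sum_{k=1}^{n}\binom{n}{k}\de^{k}=0$. I would now bring in $\star$-hypercentrality to show that $\de$ is locally nilpotent. For nonzero $z$ let its level be the least ordinal $\mu$ with $z\in\zen{\mu}{B}$; this $\mu$ is always a successor, say $\mu=\sigma+1$, and then $b\star z\in\zen{\sigma}{B}$, so $\de$ strictly lowers the level of any element it does not annihilate. As the ordinals are well-ordered, for each fixed $x$ the sequence $x,\de(x),\de^{2}(x),\dots$ reaches $0$ after finitely many steps. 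Finally I would combine local nilpotency with torsion-freeness: fixing $x\ne 0$ and letting $m\ge 1$ be minimal with $\de^{m}(x)=0$, I apply the relation $\sum_{k\ge1}\binom{n}{k}\de^{k}=0$ to $\de^{m-2}(x)$ when $m\ge 2$; every term with $k\ge 2$ dies and one is left with $n\,\de^{m-1}(x)=0$, whence $\de^{m-1}(x)=0$, contradicting minimality. Thus $m=1$ and $\de(x)=0$ for every $x$, i.e. $b\star x=0$ for all $x$; together with $x\star b=0$ this gives $b\in\zet{B}$.

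The main obstacle is the successor step, and within it the verification that $\de=\la_b-\mathrm{id}$ is (locally) nilpotent: this is precisely where $\star$-hypercentrality must be invoked, through the level-descent argument. The companion trick $nb=b^{n}$ is what converts the additive torsion hypothesis into the multiplicative relation $(\la_b)^{n}=\mathrm{id}$, and it deserves to be isolated, since without it there is no visible link between the additive multiple $nb$ landing in $\zet{B}$ and the automorphism $\la_b$. I would also take care that only \emph{local} nilpotency of $\de$ is available, the level of $x$ being possibly an arbitrarily large ordinal, and confirm, as above, that the binomial cancellation survives with the per-element index $m$ in place of a uniform nilpotency degree.
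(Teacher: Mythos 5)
Your proof is correct, so let me first set the stage for the comparison: the paper does not actually contain a proof of Theorem~\ref{t:cor49} at all; it imports the result from \cite{DKS25} and \cite{BEFPT25}. Your argument therefore has to stand on its own, and it does. The strengthening to isolation of each term $\zen{\al}{A}$ is the right move: limit ordinals become trivial, and the successor step collapses to a statement about the single brace $B=A/\zen{\al}{A}$, whose $\star$-center is $\zen{\al+1}{A}/\zen{\al}{A}$ by the very definition of the upper $\star$-central series (so $B$ is $\star$-hypercentral and, by the inductive hypothesis, torsion-free). Each ingredient of your core claim checks out against facts actually available in the paper: the ``right-hand half'' $x\star b=0$ uses only Lemma~\ref{l:l3}(i) plus torsion-freeness, and you correctly avoid Lemma~\ref{l:l3}(ii), which would require $b\in\zen{2}{B}$; the bridge $nb=b^{n}$, obtained by induction from Proposition~\ref{p:p1}(i) once $\la_x(b)=b$ is known, is exactly what turns the additive hypothesis $nb\in\zet{B}$ into the multiplicative relation $(\la_b)^{n}=\la_{b^{n}}=\la_{nb}=\mathrm{id}$ via Proposition~\ref{p:p1}(v); the level-descent argument for $\de=\la_b-\mathrm{id}$ is sound (the least level of a nonzero element is necessarily a successor ordinal, and $z\in\zen{\si+1}{B}$ forces $b\star z\in\zen{\si}{B}$), so $\de$ is locally nilpotent by well-ordering; and the binomial cancellation $\sum_{k\ge 1}\binom{n}{k}\de^{k}=0$ applied to $\de^{m-2}(x)$ correctly leaves $n\,\de^{m-1}(x)=0$, where torsion-freeness contradicts minimality of $m$. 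In effect you have transported the classical Mal'cev-style argument for torsion-free nilpotent groups (``$x^{n}$ central implies $x$ central'') into the brace setting through the homomorphism $\la:(B,\cdot)\to\mathrm{Aut}(B,+)$, which gives a clean, self-contained proof of a result the paper leaves as a citation.
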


We  note that in \cite{BEFPT25} 
the authors define an element $a$ of a brace $A$ to be \emph{periodic} if $\br{a}$ is finite and in  this case the \emph{order of $a$} is $|\br{a}|$. If $\pi$ is a set of primes, then $a$ is a \emph{$\pi$-element} if its order is a $\pi$-number. A \emph{$\pi$-subbrace} is just a subbrace all of whose elements are  $\pi$-elements. If $p$ is a prime a \emph{Sylow $p$-subbrace} of $A$ is simply a maximal element of the set of all $p$-subbraces with respect to inclusion. The following catch-all theorem holds:

\begin{theorem}\label{t:marco}
Let $A$ be a locally Smoktunowicz-nilpotent left brace and let $T$ be the torsion-subgroup of the additive group of $A$.  Then 
\begin{enumerate}
\item[  (i)] $T$ coincides with the torsion subgroup of $(A,\cdot)$ and $T$ is an ideal of $A$;
\item[ (ii)] For every prime $p$, the Sylow $p$-subgroup $P$ of $T$ coincides with the Sylow $p$-subgroup of $(T,\cdot)$. Furthermore, $P$ is the unique maximal Sylow $p$-subbrace of $A$ and is an ideal of $A$.
\end{enumerate}
\end{theorem}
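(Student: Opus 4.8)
\textbf{Proof strategy for Theorem~\ref{t:marco}.} The plan is to establish the two parts in order, leveraging the fact that local Smoktunowicz-nilpotency lets us reduce every statement about finitely many elements to the finitely generated, hence $\star$-nilpotent, case. Throughout I would use the $\star$-upper central series to run inductions, since a $\star$-nilpotent brace has a finite central series whose factors are $\star$-central, and a $\star$-central element $a$ (that is, $a\in\zet{A}$) satisfies $a\star x=x\star a=0$, equivalently $ab=a+b=ba$ for all $x,b$; on such elements addition and multiplication agree, which is the engine that ties the additive and multiplicative torsion together.

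\textbf{Part (i).} First I would show that the additive torsion subgroup $T$ coincides with the multiplicative torsion subgroup. Take $a\in A$ with $na=0$ for some positive integer $n$. The key reduction is to pass to the one-generator subbrace $\br{a}$, which is finitely generated and so Smoktunowicz-nilpotent; inside a $\star$-nilpotent brace I would argue by induction on the $\star$-central length, using Lemma~\ref{l:extra1} and Lemma~\ref{l:l3} to relate additive and multiplicative orders. For a $\star$-central element the identity $ab=a+b$ gives $a^k=ka$ directly, so additive order equals multiplicative order; the inductive step would push this through the central quotients, using that in a torsion situation a power of $a$ lands in the next center term. This shows every additively periodic element is multiplicatively periodic and conversely, so the two torsion subgroups coincide as sets. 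That $T$ is closed under both operations, and is an ideal, then follows because $(A,+)$ is abelian (so $T$ is an additive subgroup), because the coincidence of torsion subgroups makes $T$ multiplicatively closed, and because for $a\in A$, $t\in T$ the elements $a\star t$ and $t\star a$ again have finite order (one checks they lie in a finitely generated, hence $\star$-nilpotent, subbrace and invoke the torsion transfer just proved).

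\textbf{Part (ii).} With $T$ an ideal, I would decompose $(T,+)=\Dr_p P_p$ into its primary components, each $P_p$ being the additive Sylow $p$-subgroup. The aim is to show each $P_p$ is a subbrace, indeed an ideal, and that it equals the multiplicative Sylow $p$-subgroup. Since additive and multiplicative orders of each element of $T$ agree by Part~(i), the set of $p$-elements is the same whether computed additively or multiplicatively, so $P_p$ coincides with the multiplicative Sylow $p$-subgroup of $(T,\cdot)$. To see $P_p$ is closed under multiplication and is an ideal, I would again localize: given two $p$-elements, or a $p$-element $t$ and an arbitrary $a\in A$, work inside a finitely generated subbrace $C$, which is $\star$-nilpotent; there the $p$-elements form the additive primary component of the (finite) torsion subgroup of $C$, and one checks using Lemma~\ref{l:l3}(i) that $a\star t$ has $p$-power order when $t$ does, so $a\star t\in P_p$ and likewise $t\star a\in P_p$. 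Uniqueness and maximality as a Sylow $p$-subbrace follow because any $p$-subbrace consists of $p$-elements, hence lies in $P_p$.

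\textbf{Main obstacle.} The crux is the torsion-transfer in Part~(i): proving that additive $p$-power order forces multiplicative $p$-power order (and vice versa) when the element is not itself $\star$-central, so that the clean identity $ab=a+b$ is unavailable and one must control the interaction of the two operations across several layers of the upper $\star$-central series. The careful bookkeeping here — tracking how the order of an element changes as one descends through the central factors, and ensuring no new prime divisors are introduced by the $\star$-operation — is where the real work lies, and it is exactly the place where local Smoktunowicz-nilpotency (reducing to the finite $\star$-nilpotent case) is indispensable.
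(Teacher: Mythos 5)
You should first be aware that the paper contains no proof of Theorem~\ref{t:marco} at all: it is imported verbatim from \cite{DKS25} and \cite{BEFPT25} (``the following two results which appear in \cite{DKS25} and \cite{BEFPT25}''), so there is no in-paper argument to match and your sketch has to stand on its own against the published proofs. In outline it does parallel them --- reduce to finitely generated, hence Smoktunowicz-nilpotent, subbraces; induct along the upper $\star$-central series; use that on $\zet{A}$ one has $ab=a+b$, so additive and multiplicative orders agree there. But what you have written is a plan, not a proof, and its decisive step is left open. In the inductive step of Part~(i), passing to $B/\zet{B}$ gives you $a^m\in\zet{B}$ for some $m$, but before the $\star$-central identity can finish the argument you must know that $a^m$ is \emph{additively} torsion. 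From $a^{j+1}=a^j+a+a\star a^j$ one gets $a^m=ma+a\star(a+a^2+\cdots+a^{m-1})$; the summand $ma$ is torsion, but the $\star$-term is not obviously so, and controlling it is the entire content of the theorem. You explicitly flag this as the ``main obstacle'' without resolving it, so the proposal has a genuine gap exactly at its core.

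A second concrete problem is the asymmetry you gloss over with ``likewise.'' That $a\star t$ is torsion when $t$ is costs nothing: Lemma~\ref{l:l3}(i) gives $n(a\star t)=a\star nt=0$, with no nilpotency needed. But $t\star a$ is a different matter: the mirror identity $(nt)\star a=n(t\star a)$ is Lemma~\ref{l:l3}(ii) and is only available for $t\in\zen{2}{A}$, and Proposition~\ref{p:p1}(iv) shows $\star$ is not additive in the left slot in general. So the inclusion $T\star A\subseteq T$ (and likewise $P\star A\subseteq P$ in Part~(ii)) is precisely where an induction through the $\star$-central series of the finitely generated subbrace $\br{\{t,a\}}$ must actually be run, and your sketch gives no mechanism for it. A further, smaller omission: the paper defines a $p$-element by finiteness of $|\br{a}|$, not by the additive or multiplicative order of $a$, so your identification of $P$ with the set of additive $p$-torsion elements silently requires the fact that a torsion element of a $\star$-nilpotent brace generates a finite subbrace --- true, but needing proof (compare Lemma~\ref{l:extra2}(i) for the flavour of the computation), and absent from your outline.
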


\section{Structure of torsion-free Smoktunowicz-nilpotent T-braces}\label{s:torfree}

We shall say that an element $a$ of a left brace $A$ has \emph{infinite additive order} if the element $a$ has infinite order as an element of $(A,+)$ and we say that $a$ has \emph{finite additive order $k$} if $a$ has order $k$ as an element of $(A,+)$. We shall also use at various points in our exposition the clear fact that  if $I$ is an ideal of a T-brace $A$, then $I$ is also a T-brace. Furthermore, if $A$ is a Smoktunowicz-nilpotent T-brace, then $A$ is a Dedekind brace. With this in mind the following result holds.

\begin{lemma}\label{l:l10}
Let $A$ be a T-brace and let $n$ be a natural number. Then  $\br{a}$ is an ideal of $A$ for all $a\in\zen{n}{A}$.
\end{lemma}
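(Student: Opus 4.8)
The plan is to exhibit $\br{a}$ as a \emph{subideal} of $A$, that is, as the bottom term of a finite chain of subbraces each of which is an ideal of the next and which terminates at $A$, and then to invoke the defining property of a T-brace, namely that the relation of being an ideal is transitive, thereby upgrading this to the statement that $\br{a}$ is an ideal of $A$ itself.

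First I would note that, since $a\in\zen{n}{A}$ and $\zen{n}{A}$ is a subbrace of $A$, we have $\br{a}\leq\zen{n}{A}$. The heart of the matter is then Lemma~\ref{l:hyperideal}, applied with $S=\br{a}$: for each $i$ with $0\leq i<n$ the subbrace $\br{a}+\zen{i}{A}$ is an ideal of $\br{a}+\zen{i+1}{A}$. Since $\zen{0}{A}=0$, the bottom sum is just $\br{a}$; and since $\br{a}\leq\zen{n}{A}$, the top sum collapses to $\br{a}+\zen{n}{A}=\zen{n}{A}$. This yields the chain
\[
\br{a}\norm \br{a}+\zen{1}{A}\norm\cdots\norm \br{a}+\zen{n}{A}=\zen{n}{A},
\]
and, because $\zen{n}{A}$ is a term of the upper $\star$-central series and hence an ideal of $A$, the chain extends one further step to $A$. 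Thus $\br{a}$ is a subideal of $A$.

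To conclude, I would apply the T-brace hypothesis. Working downward from the top of the chain and using transitivity of the ideal relation repeatedly, one obtains in turn that $\br{a}+\zen{n-1}{A}$ is an ideal of $A$, then that $\br{a}+\zen{n-2}{A}$ is an ideal of $A$, and after finitely many steps that $\br{a}$ is an ideal of $A$, as required. The case $n=1$ also serves as a transparent base instance: if $a\in\zen{1}{A}=\zet{A}$ then $a\star a=0$, so $\br{a}=\cyclic{a}$ by Lemma~\ref{l:extra1}(i), and $\cyclic{a}$ is an ideal of $A$ by Lemma~\ref{l:extra1}(ii).

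I do not anticipate a genuine computational obstacle; the substance of the lemma is simply the recognition that Lemma~\ref{l:hyperideal} manufactures precisely the subideal chain against which the T-brace hypothesis can be brought to bear. The one point demanding a little care is that each intermediate term $\br{a}+\zen{i}{A}$ is genuinely a subbrace, so that the chain lives in the subbrace lattice of $A$ and transitivity applies; but this is already part of the content of Lemma~\ref{l:hyperideal}.
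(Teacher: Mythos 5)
Your proposal is correct and follows essentially the same route as the paper: both apply Lemma~\ref{l:hyperideal} with $S=\br{a}$ to build the chain $\br{a}\norm\br{a}+\zen{1}{A}\norm\cdots\norm\br{a}+\zen{n}{A}=\zen{n}{A}\norm A$ and then use the T-brace hypothesis repeatedly to collapse it. The only (immaterial) difference is bookkeeping: the paper first concludes that $\br{a}$ is an ideal of $\zen{n}{A}$ and then applies transitivity once more, whereas you collapse the chain from the top down; both are valid instances of the same argument.
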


\begin{proof}
It follows from Lemma~\ref{l:hyperideal} that $\br{a}+\zen{k}{A}$ is an ideal of $\br{a}+\zen{k+1}{A}$ for all natural numbers $k$. From this and the fact that $A$ is a T-brace we easily deduce that $\br{a}$ is an ideal of $\br{a}+\zen{n}{A}$. However $\br{a}\leq \zen{n}{A}$ since $a\in\zen{n}{A}$. Hence $\br{a}$ is an ideal of $\zen{n}{A}$ which is itself an ideal of $A$. Thus $\br{a}$ is an ideal of $A$.
\end{proof}

\begin{lemma}\label{l:l5}
Let $A$ be a T-brace, let $n$ be a natural number and suppose that $a\in\zen{n}{A}\setminus\zet{A}$. If $a$ has infinite additive order, then $a\star a\neq 0$.
\end{lemma}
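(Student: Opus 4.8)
The plan is to argue by contradiction: I will assume $a\star a=0$ and derive that $a\in\zet{A}$, contradicting the hypothesis $a\in\zen{n}{A}\setminus\zet{A}$. The starting point is Lemma~\ref{l:extra1}(i), which tells me that $a\star a=0$ forces $\br{a}=\cyclic{a}$ to be a trivial subbrace in which multiplication coincides with addition; in particular $a^{k}=ka$ for every integer $k$. Since $a$ has infinite additive order, $\cyclic{a}$ is a nontrivial (infinite cyclic) subgroup of $(A,+)$, and since $A$ is a T-brace with $a\in\zen{n}{A}$, Lemma~\ref{l:l10} upgrades $\cyclic{a}=\br{a}$ to an \emph{ideal} of $A$. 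As $\cyclic{a}\leq\zen{n}{A}\leq\zefin{A}$, Proposition~\ref{p:prop4} then produces a nonzero element $ma\in\cyclic{a}\cap\zet{A}$ for some nonzero integer $m$. The remainder of the argument is to promote the centrality of $ma$ to centrality of $a$ itself, exploiting that $\cyclic{a}$ is torsion-free.

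The first half, showing $x\star a=0$ for every $x\in A$, is the easy direction. For any $x$ I have $0=x\star(ma)=m(x\star a)$ by Lemma~\ref{l:l3}(i); and since $\cyclic{a}$ is an ideal, $x\star a\in\cyclic{a}$, say $x\star a=ta$. Then $mta=0$, and because $a$ has infinite additive order this forces $t=0$, so $x\star a=0$.

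The second half, showing $a\star x=0$ for every $x$, is where I expect the main obstacle. The naive approach would be to write $0=(ma)\star x=m(a\star x)$, but this invokes Lemma~\ref{l:l3}(ii), which presupposes $a\in\zen{2}{A}$ --- information I do not have, since $a$ merely lies in $\zen{n}{A}$. Instead I plan to work through the maps $\la_a$. Because $\cyclic{a}$ is an ideal, $a\star x\in\cyclic{a}$, so I may write $\la_a(x)=x+s_x a$ for an integer $s_x$; additivity of $\la_a$ (Proposition~\ref{p:p1}(v)) makes $x\mapsto s_x$ additive, and since $a\star a=0$ gives $\la_a(a)=a$, a short induction yields $(\la_a)^{k}(x)=x+ks_x a$. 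The crucial link is that $(\la_a)^{m}=\la_{a^{m}}=\la_{ma}$ by Proposition~\ref{p:p1}(v) together with the identification $a^{m}=ma$, and $\la_{ma}$ is the identity because $ma\in\zet{A}$. Evaluating at $x$ gives $ms_x a=0$, whence $s_x=0$ and $a\star x=0$. Combining the two halves yields $a\in\zet{A}$, the desired contradiction. The points demanding care are the identification $a^{m}=ma$ (supplied by Lemma~\ref{l:extra1}(i)) and the two cancellations $mta=0\Rightarrow t=0$ and $ms_xa=0\Rightarrow s_x=0$, which are precisely where the infinite additive order of $a$ is used.
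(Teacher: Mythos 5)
Your proof is correct and takes essentially the same route as the paper's: both argue by contradiction, use Lemma~\ref{l:extra1} and Lemma~\ref{l:l10} to make $\cyclic{a}$ an ideal, invoke Proposition~\ref{p:prop4} to produce a nonzero central element $ma\in\cyclic{a}\cap\zet{A}$, and kill $x\star a$ for all $x$ via Lemma~\ref{l:l3}(i) and the torsion-freeness of $\cyclic{a}$. The only cosmetic difference lies in the second half: you run the induction through powers of $\lambda_a$ using Proposition~\ref{p:p1}(v) and the identity $\lambda_{ma}=\mathrm{id}$, whereas the paper computes $a^r\star u=r(a\star u)$ directly from Proposition~\ref{p:p1}(iii) and uses $ra\star u=0$ --- the same calculation in different notation.
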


\begin{proof}
Suppose to the contrary that $a\star a=0$. Then Lemma~\ref{l:extra1} implies $\br{a}=\cyclic{a}$  and we deduce from Lemma~\ref{l:l10} that $\cyclic{a}$ is an ideal of $A$. 

Since $a\notin Z=\zet{A}$, there exists $u\in A$ such that either $u\star a\neq 0$ or $a\star u\neq 0$. Suppose first that $u\star a\neq 0$. We know $u\star a\in \cyclic{a}$ since $\cyclic{a}$ is an ideal of $A$. It follows from Proposition~\ref{p:prop4} 
that $\cyclic{a}\cap Z =\cyclic{ra} \neq 0$, for some positive integer $r$. However, Lemma~\ref{l:l3} implies $0=u\star ra=r(u\star a)$. Thus $u\star a$ has finite additive order in the group $\cyclic{a}$ which is a contradiction, since $u\star a\neq 0$. Thus $u\star a=0$ and consequently $a\star u\neq 0$.  However $a\star u\in \cyclic{a}$ since $\cyclic{a}$ is an ideal so $a\star u=sa=a^s$ for some integer $s$. Then using Proposition~\ref{p:p1} and a simple induction we have 
\begin{align*}
0&=ra\star u=a^r\star u=a^{r-1}\star (a\star u)+ (a\star u)+(a^{r-1}\star u)\\
&=a^{r-1}\star a^s+r(a\star u)=r(a\star u)
\end{align*}
and again $a\star u$ has finite order contrary to $\cyclic{a}$ having infinite order. The result follows.

\end{proof}

\begin{proposition}\label{p:p11}
Let $A$ be a left brace and let $a\in A$ be an element of infinite additive order. Let $c=a\star a$ and suppose that $a\star c=0$. Then 
$\cyclic{a}\cap\cyclic{c}=0$. In particular, if $a\in\zen{2}{A}$, then $\cyclic{a}\cap \cyclic{a\star a}=0$.
\end{proposition}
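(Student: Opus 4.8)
The plan is to translate the two hypotheses into statements about the additive automorphism $\la_a$ and then exploit the fact that $\la_a$ fixes $c$. Recall that $c=a\star a=\la_a(a)-a$, so that $\la_a(a)=a+c$, while the assumption $a\star c=0$ says precisely that $\la_a(c)-c=0$, i.e. $\la_a(c)=c$. I would then argue by contradiction, assuming $\cyclic{a}\cap\cyclic{c}\neq 0$ and picking a nonzero element $z$ in this intersection, so that $z=ma=nc$ for suitable integers $m,n$. Since $a$ has infinite additive order and $z\neq 0$, we must have $m\neq 0$.

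The key step is to apply the additive automorphism $\la_a$ to the relation $ma=nc$. Using additivity of $\la_a$ together with $\la_a(a)=a+c$ gives $\la_a(ma)=m\la_a(a)=m(a+c)=z+mc$, whereas using $\la_a(c)=c$ gives $\la_a(nc)=n\la_a(c)=nc=z$. Comparing the two computations yields $z+mc=z$, hence $mc=0$. To finish, I multiply $z=nc$ by $m$: on one hand $mz=n(mc)=0$, while on the other hand $mz=m(ma)=m^2a\neq 0$ because $a$ has infinite additive order and $m\neq 0$. This contradiction establishes $\cyclic{a}\cap\cyclic{c}=0$. Note that this single computation disposes of both the case where $c$ is torsion and the case where $c$ has infinite additive order, so no separate case analysis is needed.

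For the final clause I would observe that the additional hypothesis immediately supplies the condition already used: if $a\in\zen{2}{A}$, then $c=a\star a\in\zet{A}$ by the definition of the second $\star$-center, and every element of $\zet{A}$ is annihilated by $\star$ on both sides, so in particular $a\star c=0$. The main (and essentially only) obstacle is conceptual rather than computational, namely recognising that the hypothesis $a\star c=0$ should be read as the fixed-point relation $\la_a(c)=c$; once this is in hand, applying $\la_a$ to any relation $ma=nc$ forces $mc=0$, and the contradiction with the infinite additive order of $a$ is routine.
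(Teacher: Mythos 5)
Your proof is correct and follows essentially the same route as the paper: since $\la_a(x)=a\star x+x$, your fixed-point computation comparing $\la_a(ma)$ with $\la_a(nc)$ is literally the paper's identity $mc=n(a\star c)=0$ written in $\la$-notation rather than $\star$-notation. The only difference is cosmetic---the paper first disposes of the case where $c$ has finite additive order and then derives the contradiction assuming $c$ has infinite order, whereas you handle both cases at once via $m^{2}a=mz=n(mc)=0$; your explicit justification of the ``in particular'' clause (that $a\in\zen{2}{A}$ forces $a\star a\in\zet{A}$, hence $a\star c=0$) is also correct.
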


\begin{proof} 
If $c$ has finite additive order, then clearly $\cyclic{a}\cap \cyclic{c}=0$.
Thus, assume  that $c$ has infinite additive order and that $\cyclic{a}\cap \cyclic{c}\neq 0$. Then there are integers $m,n$ such that $ma=nc$. Then, using Proposition~\ref{p:p1}, 
\[
0=n(a\star c)=a\star nc=a\star ma=m(a\star a)=mc
\]
so that $c$ has finite additive order, a contradiction. Hence $\cyclic{a}\cap \cyclic{c}=0$ in this case also.
\end{proof}

The next result then follows easily.

\begin{corollary}\label{c:c7}
Let $A$ be a T-brace and suppose that the additive group of $\zet{A}$ is torsion-free. Then for every element $a\in\zen{2}{A}$ either $\br{a}=\cyclic{a}$ (and in this case $a\in\zet{A}$) or $\br{a}=\cyclic{a}\oplus\cyclic{a\star a}$.
\end{corollary}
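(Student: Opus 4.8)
The plan is to argue by a dichotomy according to whether or not $a$ lies in $Z=\zet{A}$. Throughout I would use that, since $a\in\zen{2}{A}$, the defining property of the second term of the upper $\star$-central series gives $a\star x\in Z$ and $x\star a\in Z$ for every $x\in A$; in particular $a\star a\in Z$.

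If $a\in Z$, then $a\star a=0$ and Lemma~\ref{l:extra1}(i) immediately yields $\br{a}=\cyclic{a}$, which is the first alternative. The substance lies in the case $a\notin Z$, where the goal is to establish $\br{a}=\cyclic{a}\oplus\cyclic{a\star a}$. First I would show that $a$ has infinite additive order: if $a$ had finite additive order $k$, then $ka=0$, and Lemma~\ref{l:l3} (part (ii), using $a\in\zen{2}{A}$, together with part (i)) would give $k(a\star x)=(ka)\star x=0$ and $k(x\star a)=x\star(ka)=0$ for all $x\in A$; since $a\star x,\,x\star a\in Z$ and $(Z,+)$ is torsion-free, this forces $a\star x=x\star a=0$, i.e. $a\in Z$, a contradiction. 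With $a$ of infinite additive order and $a\in\zen{2}{A}\setminus Z$, Lemma~\ref{l:l5} gives that $c=a\star a\neq 0$. Because $c\in Z$ we have $a\star c=0$, so Proposition~\ref{p:p11} applies and yields $\cyclic{a}\cap\cyclic{a\star a}=0$. Combining this with Lemma~\ref{l:extra1}(iii), which asserts $\br{a}=\cyclic{a}+\cyclic{a\star a}$, delivers the desired direct sum decomposition.

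To confirm the parenthetical claim that $\br{a}=\cyclic{a}$ forces $a\in Z$, I would observe that in the second case $c=a\star a$ is a nonzero element of the torsion-free group $Z$, so $\cyclic{a\star a}$ is infinite and meets $\cyclic{a}$ trivially; hence $\br{a}$ strictly contains $\cyclic{a}$, and the two alternatives are genuinely mutually exclusive.

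I expect the only real obstacle to be the verification that an element of finite additive order in $\zen{2}{A}$ must in fact lie in $Z$. This is precisely where the torsion-freeness of $\zet{A}$ enters, and it is what ties the first alternative $\br{a}=\cyclic{a}$ to the conclusion $a\in Z$; every other step is a direct appeal to the lemmas already established.
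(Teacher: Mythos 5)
Your proof is correct and follows essentially the same route as the paper's own proof: Lemma~\ref{l:l5} to tie $a\star a=0$ to membership in $\zet{A}$, then Lemma~\ref{l:extra1} and Proposition~\ref{p:p11} to obtain the direct-sum decomposition. The only difference is that you explicitly verify that an element of $\zen{2}{A}$ of finite additive order must already lie in $\zet{A}$ (via Lemma~\ref{l:l3} and the torsion-freeness of $\zet{A}$), a step the paper's one-line proof leaves implicit since Lemma~\ref{l:l5} carries an infinite-additive-order hypothesis; this makes your write-up slightly more complete, not a different approach.
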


\begin{proof} Let $a\in\zen{2}{A}$. Lemma~\ref{l:l5} shows that $a\star a=0$ if and only if $a\in\zet{A}$. The result follows by Lemma~\ref{l:extra1} and Proposition~\ref{p:p11}.
\end{proof}

We may generalize  Lemma~\ref{l:extra1} and Corollary~\ref{c:c7} as follows:

\begin{lemma}\label{l:extra2}
Let $A$ be a T-brace and suppose that $a\in\zen{n}{A}$ for some natural number $n$. Let $a_1=a, a_{i+1}=a_i\star a_i$ for all $i\geq 1$. Then 
\begin{enumerate}
\item [ (i)] $\br{a}$ is an ideal of $A$ and 
\[
\br{a}=\cyclic{a_1}+\cyclic{a_2}+\dots +\cyclic{a_n};
\]
\item [(ii)] If the additive group of $\zet{A}$ is torsion-free, then $\br{a}=\cyclic{a_1}\oplus \cyclic{a_2}\oplus\dots\oplus\cyclic{a_n}$.
\end{enumerate}
\end{lemma}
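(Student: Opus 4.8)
The plan is to prove both parts by induction on $n$, using throughout the structural observation that the sequence $(a_i)$ descends the upper $\star$-central series. First I would note that $a_i\in\zen{n-i+1}{A}$ for $1\le i\le n$: since $b\in\zen{m}{A}$ forces $b\star x\in\zen{m-1}{A}$ for every $x$, in particular $b\star b\in\zen{m-1}{A}$, the claim follows from $a_1=a\in\zen{n}{A}$ by an immediate induction. Thus $a_2\in\zen{n-1}{A}$ (which will drive the induction), $a_n\in\zet{A}$, and $a_{n+1}=0$. The assertion in (i) that $\br{a}$ is an ideal is exactly Lemma~\ref{l:l10}, so the real content of (i) is the displayed sum decomposition.

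For that decomposition I would proceed as in Lemma~\ref{l:extra1}(iii), but passing to a quotient instead of a single cyclic ideal. By Lemma~\ref{l:l10}, $\br{a_2}$ is an ideal of $A$ (as $a_2\in\zen{n-1}{A}$), so I may form $\bar A=A/\br{a_2}$. In $\bar A$ we have $\bar a\star\bar a=a_2+\br{a_2}=0$, so Lemma~\ref{l:extra1}(i) gives $\br{\bar a}=\cyclic{\bar a}$ and hence $\br{a}=\cyclic{a}+\br{a_2}$. The inductive hypothesis applied to $a_2\in\zen{n-1}{A}$, whose own sequence is $a_2,a_3,\dots,a_n$, yields $\br{a_2}=\cyclic{a_2}+\dots+\cyclic{a_n}$; combining gives (i). The base case $n=1$ is immediate from Lemma~\ref{l:extra1}(i), since then $a\in\zet{A}$ and $a\star a=0$.

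For (ii) I would induct on $n$ to upgrade the sum $\br{a}=\cyclic{a_1}+\br{a_2}$ to a direct sum. By induction $\br{a_2}=\cyclic{a_2}\oplus\dots\oplus\cyclic{a_n}$, so everything reduces to showing $\cyclic{a_1}\cap\br{a_2}=0$ (the case $a_1\in\zen{n-1}{A}$ is degenerate: then $a_n=0$ and the statement collapses to level $n-1$). Since $\br{a_2}\subseteq\zen{n-1}{A}$ while $\cyclic{a_1}\subseteq\zen{n}{A}$, any element of $\cyclic{a_1}\cap\br{a_2}$ is a multiple $ma_1$ lying in $\zen{n-1}{A}$; hence (ii) follows at once provided the factor $\zen{n}{A}/\zen{n-1}{A}$ is torsion-free, for then $a_1\notin\zen{n-1}{A}$ forces $m=0$.

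The main obstacle is therefore to show, from the single hypothesis that $\zet{A}$ is torsion-free, that every factor $\zen{m}{A}/\zen{m-1}{A}$ is torsion-free; this is the mechanism of Corollary~\ref{c:c7} and Proposition~\ref{p:p11} carried up the series. I would prove it by a separate induction on $m$, the case $m=1$ being the hypothesis. For $m\ge2$, given $b\in\zen{m}{A}$ with $kb\in\zen{m-1}{A}$, I pass to $\bar A=A/\zen{m-2}{A}$; here $\bar b\in\zen{2}{\bar A}$ with $k\bar b\in\zet{\bar A}$, and $\zet{\bar A}=\zen{m-1}{A}/\zen{m-2}{A}$ is torsion-free by the inductive hypothesis. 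Now the linearity identities of Lemma~\ref{l:l3} finish the job: for every $\bar x$ one has $\bar b\star\bar x,\ \bar x\star\bar b\in\zet{\bar A}$, and from $k(\bar b\star\bar x)=(k\bar b)\star\bar x=0$ and $k(\bar x\star\bar b)=\bar x\star(k\bar b)=0$ together with torsion-freeness of $\zet{\bar A}$ one gets $\bar b\star\bar x=\bar x\star\bar b=0$, i.e.\ $\bar b\in\zet{\bar A}$ and so $b\in\zen{m-1}{A}$. This is precisely the $n=2$ computation of Corollary~\ref{c:c7} run inside $\bar A$, and it is the only place the torsion-freeness hypothesis is genuinely used; the rest is bookkeeping along the central series.
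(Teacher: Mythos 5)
Your proof is correct, and its skeleton coincides with the paper's: both induct on $n$, factor out $B=\br{a_2}$ (an ideal of $A$ since $a_2\in\zen{n-1}{A}$), apply Lemma~\ref{l:extra1}(i) in $A/B$ to obtain $\br{a}=\cyclic{a_1}+\br{a_2}$, and, for (ii), reduce directness of the sum to $\cyclic{a_1}\cap\zen{n-1}{A}=0$, which follows once the factor $\zen{n}{A}/\zen{n-1}{A}$ is known to be torsion-free (the degenerate case $a_1\in\zen{n-1}{A}$ is dispatched the same way in both proofs). The genuine divergence is how that torsion-freeness is obtained. The paper simply cites Theorem~\ref{t:cor49}; but that theorem, as stated, assumes $A$ is $\star$-hypercentral with torsion-free \emph{additive group}, whereas in Lemma~\ref{l:extra2}(ii) one only knows that $A$ is a T-brace whose $\star$-center is torsion-free, so the citation involves an implicit strengthening of the quoted result. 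Your inline induction on $m$ -- pass to $\bar A=A/\zen{m-2}{A}$, note $\bar b\in\zen{2}{\bar A}$ with $k\bar b\in\zet{\bar A}$, and use Lemma~\ref{l:l3}(i),(ii) together with torsion-freeness of $\zet{\bar A}$ to force $\bar b\star\bar x=\bar x\star\bar b=0$ -- proves exactly the statement needed from exactly the hypothesis given, so your argument is self-contained precisely where the paper leans on an external result whose hypotheses do not quite match. (This is the same computation that underlies Corollary~\ref{c:c7}, as you observe.) A minor further difference: you get the ideal claim in (i) directly from Lemma~\ref{l:l10}, while the paper re-derives it inside the induction, using that subbraces of the $\star$-nilpotent brace $\zen{n}{A}/B$ are subideals and hence ideals of the T-brace $A/B$; both routes are valid. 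If you write this up, make explicit the identification $\zet{A/\zen{m-2}{A}}=\zen{m-1}{A}/\zen{m-2}{A}$ and $\zen{2}{A/\zen{m-2}{A}}=\zen{m}{A}/\zen{m-2}{A}$ that your quotient step uses; it is standard, and the paper itself uses it in Proposition~\ref{p:p22}, but your argument depends on it.
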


\begin{proof}
(i) We use induction on $n$. When $n=1$, then $a\star a=0$ and the result then follows by Lemma~\ref{l:extra1}. Suppose that the result is true for $n-1$ and let $a\in \zen{n}{A}$. Then $a_2=a\star a\in \zen{n-1}{A}$ and by induction we have that $B=\br{a_2}\norm \zen{n-1}{A}\norm A$ and $\br{a_2}=\cyclic{a_2}+\dots+ \cyclic{a_n}$. Now $A/B$ is a T-brace and $(a_1+B)\star (a_1+B)=a_2+B =B$ so by Lemma~\ref{l:extra1} we have 
\[
\br{a_1+B}=\cyclic{a_1+B}=(\cyclic{a_1}+B)/B\leq \zen{n}{A}/B.
\]
But $\zen{n}{A}/B$ is $\star$-nilpotent so every subbrace is a subideal. In particular this means that $\cyclic{a_1}+B$ is an ideal of $A$. However $a_1\in \cyclic{a_1}+B$ so $\br{a_1}\leq \cyclic{a_1}+B$. Moreover, $a_1, a_2,\dots, a_n\in \br{a}$ so $\br{a}=\cyclic{a_1}+\dots + \cyclic{a_n}$ as required.

(ii) We again use induction on $n$. If $a\in \zet{A}$, then the result follows immediately from (i). Assume that the result is true for $n-1$. By Theorem~\ref{t:cor49} the additive group of $\zen{n}{A}/\zen{n-1}{A}$ is torsion-free, so assuming that $a_1\notin\zen{n-1}{A}$ it follows that $a_1+\zen{n-1}{A}$ has infinite order and $\cyclic{a_1}\cap \zen{n-1}{A}=0$. However $\cyclic{a_2}+\dots+\cyclic{a_n}\subseteq \zen{n-1}{A}$ so
\[
\cyclic{a_1}\cap(\cyclic{a_2}+\dots +\cyclic{a_n})=0.
\]
Hence by (i) $\br{a}=\cyclic{a_1}\oplus  (\cyclic{a_2}+\dots +\cyclic{a_n}).$
However $a_2\in\zen{n-1}{A}$ and $\br{a_2}=\cyclic{a_2}+\dots +\cyclic{a_n}$ by (i) so by the induction hypothesis we have 
\[
\br{a_2}=\cyclic{a_2}\oplus\dots\oplus \cyclic{a_n}.
\]
Hence $\br{a}=\cyclic{a_1}\oplus \cyclic{a_2}\oplus\dots\oplus \cyclic{a_n}.$
Finally, if $a_1\in\zen{n-1}{A}$, then $a_2\in \zen{n-2}{A}$ and we can immediately apply the induction hypothesis to obtain the result.
\end{proof}

\begin{lemma}\label{l:l8}
Let $A$ be a left brace and let $a\in A$ be such that $a\star a=0$. Then the subbrace $\br{a, \zet{A}}=\cyclic{a}+\zet{A}$ is abelian.
\end{lemma}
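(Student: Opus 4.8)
The plan is to set $S=\cyclic{a}+\zet{A}$ and to prove in one stroke that $S$ is a subbrace on which multiplication and addition coincide; abelianness and the identity $\br{a,\zet{A}}=S$ will then follow. Note first that $S$ is already a subgroup of $(A,+)$, being a sum of two additive subgroups, so its typical element has the form $u=ma+z$ with $m\in\mathbb{Z}$ and $z\in\zet{A}$. The crux is to show that $u\star v=0$ for all $u,v\in S$: this gives $uv=u+v\in S$, so $S$ is closed under multiplication; it gives $u^{-1}=-u\in S$, so $S$ is a multiplicative subgroup; and it says the two operations agree throughout $S$, i.e. $S$ is abelian.

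To establish $u\star v=0$, I would write $u=ma+z_1$ and $v=na+z_2$ with $z_1,z_2\in\zet{A}$. First I would use right distributivity, Proposition~\ref{p:p1}(ii), to split $u\star v=u\star(na)+u\star z_2$; the second term vanishes because $z_2\in\zet{A}$ satisfies $x\star z_2=0$ for all $x$. Then Lemma~\ref{l:l3}(i) pulls out the integer, $u\star(na)=n(u\star a)$, so it remains to see that $u\star a=(ma+z_1)\star a=0$. Here I would expand the left-hand sum by Proposition~\ref{p:p1}(iv); since $z_1\in\zet{A}$ we have $\la_{(ma)^{-1}}(z_1)=z_1$ and $z_1\star a=0$, so the two cross terms collapse and only $ma\star a$ survives. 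Finally, because $a\star a=0$, Lemma~\ref{l:extra1}(i) gives $\br{a}=\cyclic{a}$ with $a^m=ma$ and with multiplication equal to addition there, whence $ma\star a=a^m\star a=a^m+a-a^m-a=0$. Thus $u\star a=0$, and therefore $u\star v=0$.

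Having shown $u\star v=0$ throughout $S$, it follows at once that $S$ is a subbrace and that addition and multiplication coincide on it, so $S$ is abelian. It only remains to identify $S$ with $\br{a,\zet{A}}$: one inclusion holds because $S$ is a subbrace containing both $a$ and $\zet{A}$, while the reverse holds because any subbrace containing $a$ and $\zet{A}$ must contain the additive subgroup $\cyclic{a}$ together with $\zet{A}$, hence all of $\cyclic{a}+\zet{A}=S$. The only delicate point I anticipate is the expansion of $(ma+z_1)\star a$ via the non-symmetric identity Proposition~\ref{p:p1}(iv): one must check that both cross terms genuinely vanish and that the integer $m$ can be removed from $ma\star a$ \emph{without} invoking Lemma~\ref{l:l3}(ii), which would require $a\in\zen{2}{A}$, an assumption not available here. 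This is precisely what Lemma~\ref{l:extra1}(i) supplies, since it collapses all $\star$-products on $\cyclic{a}$ to $0$.
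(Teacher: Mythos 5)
Your proof is correct and follows essentially the same route as the paper: both write a general element of $\cyclic{a}+\zet{A}$ as $ma+z$ with $z\in\zet{A}$, kill all cross terms using the $\star$-central properties of $z$, and reduce everything to $n(ma\star a)=n(a^m\star a)=0$ via Lemma~\ref{l:l3}(i) and Lemma~\ref{l:extra1}(i), before identifying the resulting subbrace with $\br{a,\zet{A}}$. The only cosmetic difference is that you expand the sum $ma+z_1$ directly with Proposition~\ref{p:p1}(iv), whereas the paper first rewrites $ma+x$ as the product $(ma)x$ (using centrality of $x$) and then applies the product identity Proposition~\ref{p:p1}(iii).
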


\begin{proof}
Let $Z=\zet{A}$ and consider $C=\cyclic{a}+Z$ which is an additive subgroup of $A$. We first show that it is a subbrace and to this end let $c,d\in C$. Then there exist integers $m,n$ and elements $x,y\in Z$ such that $c= ma+x, d=na+y$. Since $x,y\in Z$,  Lemma~\ref{l:extra1} implies that $c=(ma)x=a^mx$ and $d=(na)y=a^ny$. Proposition~\ref{p:p1} then yields
\begin{align*}
c\star d &=(ma)x\star ((na)+y)=(ma)x\star na +(ma)x\star y\\
&=ma\star (x\star na)+x\star na+ma\star na
=n(a^m\star a)=0.
\end{align*}
Thus $\cyclic{a}+Z$ is a subbrace so $\br{a,Z}\leq \cyclic{a}+Z$. But clearly $\cyclic{a}+Z\leq \br{a,Z}$ so we have equality and the preceding computation shows it to be abelian.
\end{proof}


\begin{lemma}\label{l:l12}
Let $A$ be a left brace and let $a\in \zen{2}{A}$ be an element of infinite additive order. Let $c=a\star a$. If $c$ has infinite additive order,  
then $\br{2a}$ is not an ideal of $A$. 
\end{lemma}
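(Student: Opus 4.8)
The plan is to refute the ideal property directly, by producing a single element $x\in A$ and a single element $z\in\br{2a}$ with $x\star z\notin\br{2a}$, thereby showing $\br{2a}$ fails even to be a left ideal. First I would pin down the additive shape of $\br{2a}$. Since $a\in\zen{2}{A}$ and $\zen{2}{A}$ is an additive subgroup, we also have $2a\in\zen{2}{A}$, so Lemma~\ref{l:extra1}(iii) applied to $2a$ gives $\br{2a}=\cyclic{2a}+\cyclic{(2a)\star(2a)}$. I would then evaluate the generator of the second summand using Lemma~\ref{l:l3}: part (ii) pulls the first factor of $2$ out of $(2a)\star(\cdot)$ and part (i) pulls the second out of $(\cdot)\star(2a)$, yielding $(2a)\star(2a)=4(a\star a)=4c$. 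Hence $\br{2a}=\cyclic{2a}+\cyclic{4c}$.

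Next I would make this sum direct. Because $a\in\zen{2}{A}$, Proposition~\ref{p:p11} gives $\cyclic{a}\cap\cyclic{c}=0$, so a fortiori $\cyclic{2a}\cap\cyclic{4c}=0$ and every element of $\br{2a}$ has the form $2ma+4nc$ with $m,n\in\mathbb{Z}$, with the $a$-part and the $c$-part uniquely determined. The crux is then a parity obstruction: I compute $a\star(2a)=2(a\star a)=2c$ by Lemma~\ref{l:l3}(i), and I ask whether $2c\in\br{2a}$. Writing $2c=2ma+4nc$ and using directness together with the infinite additive orders of $a$ and $c$ forces $2ma=0$, hence $m=0$, and $4nc=2c$, hence $4n=2$, which is impossible. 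Therefore $2c\notin\br{2a}$. Taking $x=a\in A$ and $z=2a\in\br{2a}$, we get $x\star z=2c\notin\br{2a}$, so $\br{2a}$ is not a left ideal of $A$ and in particular not an ideal.

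The main obstacle is simply identifying the correct witness rather than any hard computation: one must test $a\star(2a)$ and not $(2a)\star(2a)$, and the whole argument turns on the asymmetry that $a\star(2a)=2c$ carries a single factor of $2$, whereas every $\star$-product of two elements of $\cyclic{2a}$ carries a factor of $4$. The hypothesis that $c$ has infinite additive order is used in an essential way here, since it is precisely what prevents $2c$ from being reabsorbed into $\cyclic{4c}$; if $c$ were torsion this obstruction could collapse.
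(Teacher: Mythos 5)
Your proof is correct and follows essentially the same route as the paper's: both decompose $\br{2a}=\cyclic{2a}\oplus\cyclic{4c}$ via Lemma~\ref{l:extra1}, Lemma~\ref{l:l3} and Proposition~\ref{p:p11}, and both use the witness $a\star(2a)=2c$, which cannot equal any $2ma+4nc$ because $c$ has infinite additive order. The only cosmetic difference is that you make the left-ideal failure and the parity obstruction fully explicit, which the paper leaves implicit.
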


\begin{proof}
Since $a\in\zen{2}{A}$ we have $a\star c=0$ and 
 Proposition~\ref{p:p11} implies that $\cyclic{a}\cap\cyclic{c}=0$.
Furthermore,  Lemma~\ref{l:extra1} enables us to deduce that $\br{a}=\cyclic{a}+\cyclic{c}=\cyclic{a}\oplus \cyclic{c}$. Let $w=2a\in\zen{2}{A}$ and set $u=w\star w\in\zet{A}$. It follows from Lemma~\ref{l:l3} that 
\[
u=2a\star 2a=4(a\star a)=4c.
\]
Furthermore, $w\star u=0=u\star w$. By Lemma~\ref{l:extra1} we have $\br{w}=\cyclic{w}+\cyclic{u}$ and since $\cyclic{a}\cap \cyclic{c}=0$ it follows that $\br{w}=\cyclic{w}\oplus \cyclic{u}$. Thus every element of $\br{w}$ can be written uniquely in the form $mw+nu=2ma+4nc$ for certain integers $m,n$. If $\br{w}$ is an ideal of $A$, then $a\star w=2c\in \br{w}$. However this cannot happen for any choice of the integers $m, n$ since $c$ has infinite additive order. It follows that $\br{w}$ is not an ideal of $A$.
\end{proof}

\begin{proof}[\textbf{Proof of Theorem~\ref{thma}}]
Suppose to the contrary, that $\zen{2}{A}\neq \zet{A}$ and choose $a\in\zen{2}{A}\setminus \zet{A}$. Theorem~\ref{t:cor49} implies that $a$ has infinite additive order. By Corollary~\ref{c:c7} we have $\br{a}=\cyclic{a}\oplus \cyclic{u}$ where $u=a\star a$. Let $c=2a\in\zen{2}{A}$. Then  Lemma~\ref{l:l10} implies that $\br{c}$ is an ideal of $A$, which contradicts Lemma~\ref{l:l12}. It follows that $\zen{\infty}{A}=\zet{A}$ as required.
\end{proof}

Corollary~\ref{cor1} is clear and Corollary~\ref{cor2} is immediate since Smoktunowicz-nilpotent braces are $\star$-nilpotent by \cite[Lemma 2.9]{DKS25}.

 Furthermore, Corollary~\ref{cor3} is also immediate since Theorem~\ref{t:marco} implies that $T$ is an ideal of $A$ and we can then apply Theorem~\ref{thma} to the factor brace $A/T$.

\section{Non-periodic Smoktunowicz-nilpotent T-braces}\label{s:nonper}

\begin{lemma}\label{l:l21}
Let $A$ be a T-brace. 
\begin{enumerate}
\item[ (i)] If $A$ is $\star$-hypercentral, then 
$A\star A$ is periodic;
\item[(ii)] if $\be$ is an ordinal and $a,b\in\zen{\be}{A}$, then $a\star b$ has finite additive order.
\end{enumerate}
\end{lemma}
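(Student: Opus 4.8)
The plan is to obtain part (i) directly from Corollary~\ref{cor3} and then to deduce part (ii) by applying part (i) to the ideal $\zen{\be}{A}$, regarded as a brace in its own right. For (i), I would first dispose of the case in which the additive group of $A$ is periodic: then every element of $A$, and in particular every generator $a\star b$ of the subgroup $A\star A$ of $(A,+)$, has finite additive order, so $A\star A$ is periodic. If instead $A$ is non-periodic, let $T$ be the torsion subgroup of $(A,+)$. Since $A$ is then a non-periodic $\star$-hypercentral T-brace, Corollary~\ref{cor3} applies and shows both that $T$ is an ideal of $A$ and that $A/T$ is abelian. Abelianness of $A/T$ means $(a+T)\star(b+T)=0$, i.e. $a\star b\in T$, for all $a,b\in A$; since $T$ is an additive subgroup and $A\star A$ is generated by the elements $a\star b$, this gives $A\star A\le T$, so $A\star A$ is periodic. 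This settles (i).

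For (ii), fix an ordinal $\be$ together with $a,b\in\zen{\be}{A}$ and set $B=\zen{\be}{A}$. Each term of the upper $\star$-central series is an ideal of $A$, so $B$ is an ideal of the T-brace $A$ and is therefore itself a T-brace, as recorded in the text. Moreover $B$ is $\star$-hypercentral: viewing $B$ as a brace on its own, a routine induction on $i$ shows that $\zen{i}{A}\le\zen{i}{B}$ for all $i\le\be$, and taking $i=\be$ yields $\zen{\be}{B}=B$. Part (i), now applied to the $\star$-hypercentral T-brace $B$, shows that $B\star B$ is periodic. Since $a,b\in B$, the element $a\star b$ is one of the generators of $B\star B$ and hence has finite additive order, which is exactly what (ii) asserts.

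I expect the one step needing genuine care to be the verification that $B=\zen{\be}{A}$ is $\star$-hypercentral, so that part (i) is legitimately applicable to it. This is the brace analogue of the familiar fact that a term of the upper central series of a group is hypercentral, and it comes down to the inclusion $\zen{i}{A}\le\zen{i}{B}$: an element annihilating all of $A$ modulo $\zen{i-1}{A}$ in the $\star$-sense certainly annihilates the smaller set $B$ modulo $\zen{i-1}{B}$. I note in passing that a direct transfinite induction on $\be$ proving (ii) verbatim stalls at successor ordinals, where one only learns $a\star b\in\zen{\alpha}{A}$ without controlling its additive order; the reduction to the subbrace $B$ and to part (i) is precisely what circumvents this. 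Everything else reduces to citations, namely Corollary~\ref{cor3} (itself built on Theorem~\ref{thma} and Theorem~\ref{t:marco}) and the stability of the T-brace property under passage to ideals.
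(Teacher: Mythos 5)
Your proof is correct and follows essentially the same route as the paper's: part (i) splits into the periodic case (trivial) and the non-periodic case handled by Corollary~\ref{cor3}, giving $A\star A\leq T$; and part (ii) applies (i) to the ideal $\zen{\be}{A}$, which is a $\star$-hypercentral T-brace. The only difference is that you spell out the verification that $\zen{\be}{A}$ is $\star$-hypercentral (via $\zen{i}{A}\leq\zen{i}{B}$), a point the paper asserts without proof.
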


\begin{proof}
(i) If $A$ is periodic, then it is clear that $A\star A$ is also periodic. If $A$ has an element of infinite additive order, then Corollary~\ref{cor3} implies that $A/T$ is abelian, where $T$ is the torsion subgroup of $(A,+)$. Hence $A\star A\leq T$.
%
%

(ii) The ideal $\zen{\be}{A}$ is a $\star$-hypercentral T-brace so the result then follows by (i).
\end{proof}

\begin{proposition}\label{p:p22}
Let $A$ be a T-brace. Then the additive group of $\zen{n}{A}/\zet{A}$ is periodic for all natural numbers $n$.
\end{proposition}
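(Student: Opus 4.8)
The plan is to induct on $n$, with the entire mathematical content concentrated in the case $n = 2$; the step from $n-1$ to $n$ for $n \ge 3$ will be a soft reduction. Two standard facts will be used freely: that a quotient of a T-brace by an ideal is again a T-brace (this is already invoked in the proof of Lemma~\ref{l:extra2}), and that the upper $\star$-central series satisfies $\zen{k}{A/\zen{m}{A}} = \zen{k+m}{A}/\zen{m}{A}$, which follows from the recursive definition together with the isomorphism theorems for braces. The case $n = 1$ is vacuous since $\zen{1}{A} = \zet{A}$.

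For the base case $n = 2$, I would argue by contradiction: suppose some $a \in \zen{2}{A}$ has $a + \zet{A}$ of infinite additive order, so that $a$ has infinite additive order and $ka \notin \zet{A}$ for every nonzero integer $k$. Set $c = a \star a$, which lies in $\zet{A}$ because $a \in \zen{2}{A}$. The crucial input is Lemma~\ref{l:l21}(ii): applied with $\beta = 2$ it forces $c$ to have finite additive order, say $kc = 0$ with $k > 0$. The point is that this torsion lets me clear denominators. Using Lemma~\ref{l:l3}(i) in the right-hand variable and Lemma~\ref{l:l3}(ii) in the left-hand variable (the latter being legitimate precisely because $a \in \zen{2}{A}$), I compute
\[
(ka) \star (ka) = k\bigl((ka) \star a\bigr) = k^{2}(a \star a) = k^{2} c = 0 .
\]
But $ka \in \zen{2}{A} \setminus \zet{A}$ has infinite additive order, so Lemma~\ref{l:l5} forces $(ka) \star (ka) \neq 0$. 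This contradiction shows that $\zen{2}{A}/\zet{A}$ is periodic.

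For the inductive step, take $n \ge 3$ and assume the result for all T-braces at index $n - 1$. Here I would pass to the T-brace $B = A/\zet{A}$. Under the identifications $\zet{B} = \zen{2}{A}/\zet{A}$ (the recursive definition) and $\zen{n-1}{B} = \zen{n}{A}/\zet{A}$ (the quotient formula above with $m = 1$), the inductive hypothesis applied to $B$ gives that $\zen{n-1}{B}/\zet{B} \cong \zen{n}{A}/\zen{2}{A}$ is periodic. Combining this with the case $n = 2$, which says $\zen{2}{A}/\zet{A}$ is periodic, the exact sequence
\[
0 \longrightarrow \zen{2}{A}/\zet{A} \longrightarrow \zen{n}{A}/\zet{A} \longrightarrow \zen{n}{A}/\zen{2}{A} \longrightarrow 0
\]
of additive groups exhibits $\zen{n}{A}/\zet{A}$ as an extension of a torsion group by a torsion group, hence torsion, completing the induction.

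The hard part is the base case $n = 2$, and within it the decisive and slightly delicate feature is that $a \star a$ must be torsion. This is genuinely opposite to the situation driving Theorem~\ref{thma} and Lemma~\ref{l:l12}, where $a \star a$ has infinite order and the contradiction comes from $\br{2a}$ failing to be an ideal; here it is the finiteness of the additive order of $a \star a$, supplied by Lemma~\ref{l:l21}(ii), that makes $(ka) \star (ka) = 0$ and lets Lemma~\ref{l:l5} bite. I also expect the need to keep careful track of where $a \in \zen{2}{A}$ is used: the two-sided linearity of Lemma~\ref{l:l3} is only available in that generality for second-level elements, which is exactly why the higher cases are reduced to $n = 2$ rather than attacked head-on.
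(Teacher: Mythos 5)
Your proof is correct and takes essentially the same approach as the paper: the base case $n=2$ is precisely the paper's argument (Lemma~\ref{l:l21} makes $a\star a$ torsion, Lemma~\ref{l:l3} gives $(ka)\star(ka)=k^{2}(a\star a)=0$, and Lemma~\ref{l:l5} supplies the contradiction). The inductive step differs only cosmetically --- you quotient by $\zet{A}$ and invoke the induction hypothesis, whereas the paper quotients by $\zen{n-1}{A}$ and reapplies the base case to each factor $\zen{n+1}{A}/\zen{n}{A}$ --- with both finishing by the same periodic-by-periodic extension observation.
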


\begin{proof}
We first handle the case $n=2$. 
Let $Z=\zet{A}$ and suppose to the contrary that $a\in\zen{2}{A}$ is an element such that $a+Z$ has infinite additive order. 
Then it follows that $\cyclic{a}\cap Z=0$. Lemma~\ref{l:l21} shows that $u=a\star a$ has finite additive order $n$, say. Now $na\in\zen{2}{A}\setminus Z$ and hence Lemma~\ref{l:l5} implies that $na\star na\neq 0$. However $na\star na=n^2u=0$, using Lemma~\ref{l:l3},  a contradiction which proves the result.

More generally, if $n\in\mathbb{N}$ we note that $A/\zen{n-1}{A}$ is a $T$-brace so by the case above the additive group of $\zen{2}{A/\zen{n-1}{A}}/\zen{1}{A/\zen{n-1}{A}}$ is periodic. Thus
\[
\zen{n+1}{A}/\zen{n}{A}\cong (\zen{n+1}{A}/\zen{n-1}{A})/(\zen{n}{A}/\zen{n-1}{A})
\]
has periodic additive group and it follows easily that the additive group of $\zen{n}{A}/\zet{A}$ is likewise periodic for all $n\in\mathbb{N}$.
\end{proof}

\begin{lemma}\label{l:l28}
Let $A$ be a T-brace and let $a\in\zen{2}{A}$. If $a$ has infinite additive order, then $x\star a, a\star x\in \cyclic{a\star a}$ for all elements $x\in A$.
\end{lemma}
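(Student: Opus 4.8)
The plan is to exploit a scaling trick: replace $a$ by the multiple $ka$, where $k$ is the additive order of $a\star a$, and show that this multiple lands in the $\star$-center. Write $u=a\star a$. Since $a\in\zen{2}{A}$, Lemma~\ref{l:l21}(ii) (with $\beta=2$) shows $u$ has finite additive order, say $k$, while $a$ has infinite additive order by hypothesis. Lemma~\ref{l:l10} gives that $\br{a}$ is an ideal of $A$, and Lemma~\ref{l:extra1}(iii) gives $\br{a}=\cyclic{a}+\cyclic{u}$. Because $\cyclic{a}$ is infinite cyclic and $\cyclic{u}$ is finite, their intersection is a finite subgroup of $\cyclic{a}\cong\mathbb{Z}$, hence trivial, so in fact $\br{a}=\cyclic{a}\oplus\cyclic{u}$.

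The crucial step is to locate $ka$ inside $Z=\zet{A}$. The element $ka$ has infinite additive order and lies in $\zen{2}{A}$, and using Lemma~\ref{l:l3}(i),(ii) one computes $(ka)\star(ka)=k^{2}u=0$, since $ku=0$. The contrapositive of Lemma~\ref{l:l5}, applied to $ka\in\zen{2}{A}$, then forces $ka\in Z$. This is the heart of the argument; everything afterwards is bookkeeping.

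With $ka\in Z$ in hand, fix $x\in A$. Then $(ka)\star x=0$ and $x\star(ka)=0$, and Lemma~\ref{l:l3} converts these into $k(a\star x)=0$ and $k(x\star a)=0$; thus both $a\star x$ and $x\star a$ have finite additive order dividing $k$. Since $\br{a}$ is an ideal we have $x\star a\in\br{a}=\cyclic{a}\oplus\cyclic{u}$, so $x\star a=ma+nu$ for integers $m,n$. Feeding this into $k(x\star a)=0$ and using $ku=0$ yields $kma=0$ in the torsion-free summand $\cyclic{a}$, whence $m=0$ and $x\star a=nu\in\cyclic{u}=\cyclic{a\star a}$. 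The identical computation applied to $a\star x\in\br{a}$ gives $a\star x\in\cyclic{a\star a}$, which completes the proof.

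The main obstacle is the second step: recognising that one should pass from $a$ to the multiple $ka$ and that this multiple becomes $\star$-central by way of Lemma~\ref{l:l5}. Once $ka\in Z$ is established, the torsion bound on $a\star x$ and $x\star a$ together with the direct decomposition of $\br{a}$ make the conclusion routine. The degenerate case $u=0$ needs a word: there $k=1$ and $ka=a$, and the same reasoning shows $a\in Z$, so $x\star a=a\star x=0\in\cyclic{a\star a}$; thus the argument runs uniformly and no separate case analysis is really needed.
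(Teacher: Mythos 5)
Your proof is correct, but it reaches the conclusion by a different mechanism than the paper's own argument. Both proofs pivot on the same dichotomy (Lemma~\ref{l:l5}) — an infinite-order element of $\zen{2}{A}$ whose $\star$-square vanishes must lie in the $\star$-center — and both use Lemma~\ref{l:l21} to see that $u=a\star a$ has finite additive order and Lemma~\ref{l:extra1} for the decomposition of $\br{a}$. The paper, however, neutralizes $u$ by passing to a quotient: it sets $C=\cyclic{a\star a}$, which is an ideal of $A$ because $u\in\zet{A}$, observes that $a+C$ is an infinite-order element of $\zen{2}{A/C}$ with $(a+C)\star(a+C)=C$, and applies Lemma~\ref{l:l5} in the T-brace $A/C$ to conclude $a+C\in\zet{A/C}$; the memberships $x\star a,\ a\star x\in C$ then fall out immediately, with no need for $\br{a}$ to be an ideal. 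You instead neutralize $u$ by scaling inside $A$ itself: with $k$ the additive order of $u$, the element $ka$ satisfies $(ka)\star(ka)=k^{2}u=0$, so Lemma~\ref{l:l5} forces $ka\in\zet{A}$, and you finish with Lemma~\ref{l:l10} (ideality of $\br{a}$), Lemma~\ref{l:l3}, and torsion bookkeeping in $\cyclic{a}\oplus\cyclic{u}$. Your route buys the convenience of never leaving $A$ — in particular you do not need the (true, and used implicitly by the paper) fact that a quotient of a T-brace is again a T-brace — at the cost of two extra ingredients (Lemma~\ref{l:l10} and Lemma~\ref{l:l3}(ii)) and a longer final computation; the paper's quotient argument is shorter once the centrality of $a+C$ is established. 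Both arguments are sound, and your handling of the degenerate case $u=0$ is also fine.
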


\begin{proof}
Lemma~\ref{l:extra1} implies that $B=\br{a}=\cyclic{a}+\cyclic{c}$, where $c=a\star a$. Let $C=\cyclic{a\star a}\leq \zet{A}$. By Lemma~\ref{l:l21}  $c$ has finite additive order and hence $B=\cyclic{a}\oplus C$. Since $C\leq\zet{A}$, $C$ is an ideal of $A$ and in $A/C$ we have $(a+C)\star (a+C)=C$. Since $a+C\in\zen{2}{A/C}$ has infinite additive order, Lemma~\ref{l:l5} implies that $a+C\in\zet{A/C}$. From this it follows that $(x+C)\star (a+C)=(a+C)\star (x+C)=C$ for all $x\in A$ and hence $x\star a, a\star x\in C$ as required.
\end{proof}

\begin{lemma}\label{l:l29}
Let $A$ be a T-brace and let $a\in\zen{2}{A}$. If $a$ has finite additive order and the additive group of $\zen{2}{A}$ is not periodic, then $x\star a, a\star x\in \cyclic{a\star a}$ for all elements $x\in A$.
\end{lemma}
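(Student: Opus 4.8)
The plan is to derive this from the companion Lemma~\ref{l:l28} by using the non-periodicity of $\zen{2}{A}$ to supply an auxiliary element of infinite additive order. Fix $x\in A$, write $c=a\star a$, and note that by Lemma~\ref{l:l21} the element $c$ has finite additive order; the goal is to show $x\star a,\,a\star x\in\cyclic{c}$. Since $\zen{2}{A}$ is not periodic I may pick $b\in\zen{2}{A}$ of infinite additive order, and then $w=a+b\in\zen{2}{A}$ also has infinite additive order because $a$ is torsion. Put $d=b\star b$ and $e=w\star w$; again by Lemma~\ref{l:l21} both $d$ and $e$ have finite additive order.

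First I would reduce to a statement ``modulo $\cyclic{d}$''. Additivity of $\star$ in the second variable (Proposition~\ref{p:p1}(ii)) gives $x\star a=x\star w-x\star b$, and, using additivity of $\star$ in the first variable on $\zen{2}{A}$ as well, the expansion $e=c+(a\star b)+(b\star a)+d$. Applying Lemma~\ref{l:l28} to the infinite-order elements $b$ and $w$ yields $x\star b,\,b\star x\in\cyclic{d}$ and $x\star w,\,w\star x\in\cyclic{e}$, and specialising the first of these to the argument $a$ gives $a\star b,\,b\star a\in\cyclic{d}$. Hence $e\in c+\cyclic{d}$, so $\cyclic{e}\subseteq\cyclic{c}+\cyclic{d}$, and combining the displayed identities I obtain $x\star a,\,a\star x\in\cyclic{c}+\cyclic{d}$. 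Crucially, this holds for \emph{every} infinite-order $b\in\zen{2}{A}$, with $d=b\star b$ depending on the choice.

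The main obstacle is to remove the unwanted summand $\cyclic{d}$, and here the freedom in the choice of $b$ is decisive. Fix once and for all an element $b_0\in\zen{2}{A}$ of infinite additive order and let $N$ be the additive order of $b_0\star b_0$. Since $x\star a$ has finite additive order it equals the sum of its primary components, so it suffices to prove that for each prime $p$ the $p$-primary component of $x\star a$ lies in $\cyclic{c}$. Fix $p$ and write $N=p^{s}m$ with $p\nmid m$; then $b=p^{s}b_0\in\zen{2}{A}$ still has infinite additive order, while Lemma~\ref{l:l3} gives $b\star b=p^{2s}(b_0\star b_0)$, an element of additive order $m$ and hence a $p'$-element. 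Running the previous paragraph with this $b$ shows $x\star a\in\cyclic{c}+\cyclic{d}$ where $d=b\star b$ has order coprime to $p$, so the $p$-primary component of $\cyclic{c}+\cyclic{d}$ coincides with that of $\cyclic{c}$; therefore the $p$-primary component of $x\star a$ lies in $\cyclic{c}$. Ranging over all primes gives $x\star a\in\cyclic{c}$, and the identical argument with $a\star x=w\star x-b\star x$ finishes the proof.

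Two routine points underpin the argument and would be verified first: that $\star$ is additive in the first variable when restricted to $\zen{2}{A}$ (which follows from the homomorphism property of $\be_c$ for $c\in\zen{2}{A}$ recorded after Proposition~\ref{p:p1}, combined with Proposition~\ref{p:p1}(i)), and the elementary bookkeeping $(\cyclic{c}+\cyclic{d})_{p}=(\cyclic{c})_{p}+(\cyclic{d})_{p}$ for finite subgroups of $(\zet{A},+)$. The genuinely new ingredient, and the step I expect to be the delicate one, is the rescaling $b\mapsto p^{s}b_0$ that annihilates the $p$-part of $b\star b$; everything else is assembled from Lemmas~\ref{l:l28}, \ref{l:l21} and~\ref{l:l3} together with the two additivity properties of $\star$.
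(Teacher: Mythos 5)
Your proof is correct, but it takes a genuinely different route from the paper's, and a considerably longer one. The paper's proof turns on one observation you did not use: by Proposition~\ref{p:p22} the additive group of $\zen{2}{A}/\zet{A}$ is periodic, so non-periodicity of $(\zen{2}{A},+)$ forces $\zet{A}$ itself to contain an element $d$ of infinite additive order. Perturbing $a$ by a \emph{central} element produces no cross terms at all: $d\star a=a\star d=d\star d=0$, hence $(d+a)\star(d+a)=a\star a$, $x\star(d+a)=x\star a$ and $(d+a)\star x=a\star x$, so a single application of Lemma~\ref{l:l28} to $d+a$ finishes the proof. Because your auxiliary element $b$ is taken only in $\zen{2}{A}$, you are forced to control the cross terms $a\star b$, $b\star a$ (via Lemma~\ref{l:l28} applied to $b$) and then to strip the unwanted summand $\cyclic{b\star b}$ by the prime-by-prime rescaling $b=p^{s}b_{0}$ and primary decomposition. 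That argument does work, and it has the mild virtue of bypassing Proposition~\ref{p:p22} entirely, relying only on Lemmas~\ref{l:l28}, \ref{l:l21} and~\ref{l:l3}; but the entire second half of your proof exists solely to repair the cross terms that the paper's choice of $d$ avoids. One point you should tighten: the additivity $(a+b)\star x=a\star x+b\star x$ for $a,b\in\zen{2}{A}$ and \emph{arbitrary} $x\in A$, which you need in the step $a\star x=w\star x-b\star x$, does not follow from the homomorphism property of $\be_c$ as recorded after Proposition~\ref{p:p1}, since that remark requires the second argument $c$ to lie in $\zen{2}{A}$. The identity is nevertheless true and routine: $a+b=a\la_{a^{-1}}(b)$ by Proposition~\ref{p:p1}(i), and $\la_{a^{-1}}(b)=b+z$ with $z=a^{-1}\star b\in\zet{A}$, so $\la_{a^{-1}}(b)\star x=b\star x$ and $a\star(b\star x)=0$ because $b\star x\in\zet{A}$; Proposition~\ref{p:p1}(iii) then gives the claim. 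You should record that computation rather than cite $\be_c$.
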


\begin{proof}
By Proposition~\ref{p:p22} the additive group of $\zet{A}$ is not periodic and we let $d\in\zet{A}$ be an element of infinite additive order. It is clear that $d\star a=a\star d=d\star d=0$ and hence 
\[
(d+a)\star (d+a)=a\star a,
\]
using Proposition~\ref{p:p1}. 
Since $d+a\in \zen{2}{A}$ has infinite additive order, Lemma~\ref{l:l28} implies that 
\[
x\star (d+a)=x\star a\in \cyclic{(d+a)\star (d+a)}=\cyclic{a\star a}
\]
and likewise $a\star x\in \cyclic{a\star a}$. This completes the proof.
\end{proof}

\begin{lemma}\label{c:c212}
Let $A$ be a T-brace and let $a\in\zen{2}{A}$ be an element of infinite additive order. Suppose that the torsion subgroup of the additive group of $\zet{A}$ is a $p$-group for some prime $p$. Then $a\star a$ has additive order at most $p$.
\end{lemma}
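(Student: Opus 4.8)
The plan is to argue by contradiction: assuming that $c=a\star a$ has additive order at least $p^2$, I will exhibit a subbrace that must be an ideal of $A$ by the T-brace hypothesis, yet fails the ideal condition because of a mismatch in additive orders.

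First I would record the basic structure. Since $a\in\zen{2}{A}$, the element $c=a\star a$ lies in $\zet{A}$, and Lemma~\ref{l:l21}(ii) (applied with $\beta=2$) shows that $c$ has finite additive order; because the torsion subgroup of $\zet{A}$ is a $p$-group, this order is $p^k$ for some $k\geq 0$. Suppose, for contradiction, that $k\geq 2$.

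The crucial device is to replace $a$ by $w=pa$, which still lies in $\zen{2}{A}$ and still has infinite additive order. Using Lemma~\ref{l:l3} I would compute $w\star w=(pa)\star(pa)=p^2c$ and $a\star w=a\star(pa)=pc$. By Lemma~\ref{l:extra1}(iii) we have $\br{w}=\cyclic{w}+\cyclic{p^2c}$, and since $w$ has infinite additive order while $p^2c$ is torsion, Proposition~\ref{p:p11} (its ``in particular'' clause applied to $w$) gives the direct decomposition $\br{w}=\cyclic{w}\oplus\cyclic{p^2c}$. The point is that the torsion subgroup of $\br{w}$ is now only $\cyclic{p^2c}$, of order $p^{k-2}$. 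Next, Lemma~\ref{l:l10} shows that $\br{w}$ is an ideal of $A$, since $w\in\zen{2}{A}$. Applying the ideal condition to $a\in A$ and $w\in\br{w}$ forces $a\star w=pc\in\br{w}$; as $pc$ has finite additive order, it must lie in the torsion subgroup $\cyclic{p^2c}$. But $pc$ has additive order $p^{k-1}$, which exceeds $|\cyclic{p^2c}|=p^{k-2}$, so $pc\notin\cyclic{p^2c}$, a contradiction. Hence $k\leq 1$, i.e. $a\star a$ has additive order at most $p$.

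The main obstacle, and the real idea, is the passage from $a$ to $pa$: multiplying by $p$ raises $w\star w=p^2c$ by an extra factor of $p$ relative to $a\star w=pc$, so the ideal $\br{w}$ becomes ``too small'' to accommodate the element $pc$ that the ideal condition requires. The only subtlety to handle with care is checking that $w=pa$ retains infinite additive order, so that the direct-sum splitting correctly isolates $\cyclic{p^2c}$ as the \emph{entire} torsion part of $\br{w}$; everything else is a routine order computation.
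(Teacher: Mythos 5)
Your proof is correct, but it takes a genuinely different route from the paper's. The paper's own proof sets $w=p^{l}a$ with $l=\lceil k/2\rceil$, so that $w\star w=p^{2l}c=0$ by Lemma~\ref{l:l3}; since $w$ has infinite additive order, Lemma~\ref{l:l5} then forces $w\in\zet{A}$, whence $0=a\star w=p^{l}c$ immediately, and comparing $p^{l}c=0$ with the order $p^{k}$ of $c$ gives $k\leq l$ and hence $k\leq 1$. You instead keep the fixed multiplier $p$: with $w=pa$ you have $w\star w=p^{2}c$ (generally nonzero), you invoke the T-brace hypothesis through Lemma~\ref{l:l10} to make $\br{w}=\cyclic{w}\oplus\cyclic{p^{2}c}$ an ideal, and you derive the contradiction from the order mismatch $a\star w=pc$ having order $p^{k-1}$ while the torsion part of $\br{w}$ has exponent only $p^{k-2}$. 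This is essentially the mechanism of the paper's Lemma~\ref{l:l12} (where $\br{2a}$ fails to be an ideal for order reasons) transplanted to the finite $p$-power setting, whereas the paper's proof of this lemma avoids any torsion-subgroup analysis by engineering $w\star w=0$ and channeling the T-brace hypothesis through Lemma~\ref{l:l5} instead of Lemma~\ref{l:l10}. What each buys: the paper's argument is shorter and concludes in one stroke from $p^{l}c=0$; yours uses a multiplier independent of $k$ and makes the ``ideal too small to receive $a\star w$'' phenomenon explicit, at the cost of the (correctly handled) verification that $\cyclic{p^{2}c}$ is the entire torsion subgroup of $\br{w}$. All your lemma citations check out: $pa\in\zen{2}{A}$ retains infinite additive order, Lemma~\ref{l:l3} justifies both $w\star w=p^{2}c$ and $a\star w=pc$, and Proposition~\ref{p:p11} (or simply torsion-free meets torsion) gives the direct sum.
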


\begin{proof}
We know from  Lemma~\ref{l:l21} that $c=a\star a$ has finite order $p^k$ for some natural number $k$. If $k$ is even let $l=k/2$ and if $k$ is odd let $l=(k+1)/2$ and set $w=p^la$. Using Lemma~\ref{l:l3} we have $w\star w=p^kc $ if $k$ is even and $w\star w=p^{k+1}c$ if $k$ is odd and in either case we obtain $w\star w=0$. Of course, $w$ has infinite additive order, so Lemma~\ref{l:l5} implies that $w\in\zet{A}$. Hence $\br{w}$ is an ideal of $A$ and in particular we have
\[
0=a\star w=p^l(a\star a)=p^lc.
\]
This implies that $k$ is  odd and indeed in this case $(k+1)/2\geq k$ so $k=1$. The result follows.
\end{proof}

\begin{corollary}\label{c:c213}
Let $A$ be a T-brace and let $a\in\zen{2}{A}$ be an element of finite additive order. Suppose that the torsion subgroup of the additive group of $\zet{A}$ is a $p$-group for some prime $p$. If the additive group of $\zen{2}{A}$ is not periodic, then the additive order of $a\star a$ is at most $p$.
\end{corollary}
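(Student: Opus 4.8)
The plan is to reduce the statement to the infinite-order case already handled in Lemma~\ref{c:c212}, by perturbing $a$ with a central element of infinite additive order. This is the same shifting device that is used in the proof of Lemma~\ref{l:l29}, and it converts the finite-order situation into the infinite-order one without changing the relevant $\star$-square.

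First I would note that the hypothesis that the additive group of $\zen{2}{A}$ is not periodic forces $\zet{A}$ to be non-periodic. Indeed, by Proposition~\ref{p:p22} the additive group of $\zen{2}{A}/\zet{A}$ is periodic, so the element of infinite additive order witnessing the non-periodicity of $\zen{2}{A}$ cannot survive in this quotient; hence $\zet{A}$ must itself contain an element $d$ of infinite additive order.

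Next I would set $w = d + a \in \zen{2}{A}$ and compute $w \star w$. Since $d \in \zet{A}$ we have $d \star x = x \star d = 0$ for all $x \in A$, and in particular $d \star d = d \star a = a \star d = 0$. Applying Proposition~\ref{p:p1}(iv), and using that $\lambda_{d^{-1}}(a) = a$ because $d^{-1} \in \zet{A}$, the expression $(d+a)\star(d+a)$ collapses to $a \star (d+a)$; then Proposition~\ref{p:p1}(ii) gives $a \star (d+a) = a \star d + a \star a = a \star a$. Thus $w \star w = a \star a$.

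Finally, since $d$ has infinite additive order while $a$ has finite additive order, no nonzero multiple of $w$ can vanish, so $w$ is an element of $\zen{2}{A}$ of infinite additive order. Lemma~\ref{c:c212} then applies to $w$ and shows that $w \star w$ has additive order at most $p$; as $w \star w = a \star a$, the additive order of $a \star a$ is at most $p$, as required. The only step needing any care is the identity $w \star w = a \star a$, but the centrality of $d$ makes all the cross terms vanish, so this is a routine verification rather than a genuine obstacle.
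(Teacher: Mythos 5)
Your proposal is correct and follows essentially the same route as the paper's own proof: both invoke Proposition~\ref{p:p22} to produce an element $d\in\zet{A}$ of infinite additive order, observe that $d+a$ has infinite additive order with $(d+a)\star(d+a)=a\star a$ because all cross terms involving $d$ vanish, and then apply Lemma~\ref{c:c212}. Your explicit verification via Proposition~\ref{p:p1}(ii) and (iv) just spells out a computation the paper leaves implicit.
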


\begin{proof}
Proposition~\ref{p:p22} implies that the additive group of $\zet{A}$ is not periodic and we let $d\in\zet{A}$ be an element of infinite additive order. Then $d+a$ also has infinite additive order and Lemma~\ref{c:c212} implies that  $p((d+a)\star (d+a))=0$. Since $d\star a=a\star d=d\star d=0$ it follows that 
\[
p(a\star a)= p((a+d)\star (a+d))=0,
\]
giving the result.
\end{proof}

\begin{lemma}\label{l:l210}
Let $A$ be a T-brace and suppose that the torsion subgroup of the additive group of $\zet{A}$ is a $p$-group for some prime $p$. Suppose that the additive group of $\zen{2}{A}$ is not periodic. If $a,b\in\zen{2}{A}$ and $a\star a\neq 0\neq b\star b$, then $\cyclic{a\star a}\cap\cyclic{b\star b}\neq 0$. In this case $\cyclic{a\star a}=\cyclic{b\star b}$.
\end{lemma}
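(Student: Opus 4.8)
The plan is to reduce everything to the observation that \emph{two subgroups of prime order $p$ meet nontrivially if and only if they coincide}, so it suffices to prove the first assertion $\cyclic{a\star a}\cap\cyclic{b\star b}\neq 0$; equality then follows automatically. First I would fix notation $u=a\star a$ and $v=b\star b$, noting $u,v\in\zet{A}$ because $a,b\in\zen{2}{A}$. Since $a$ has either finite or infinite additive order, Lemma~\ref{c:c212} (infinite order) or Corollary~\ref{c:c213} (finite order) applies under our standing hypotheses, and together with $u\neq 0$ they force $u$ to have additive order \emph{exactly} $p$; the same argument gives $v$ additive order exactly $p$. Thus $\cyclic{u}$ and $\cyclic{v}$ are subgroups of order $p$, and I only need to show they intersect nontrivially.

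The engine of the proof is the strong localization provided by Lemmas~\ref{l:l28} and~\ref{l:l29}: applied to $a$ they give $x\star a,\ a\star x\in\cyclic{u}$ for every $x\in A$, and applied to $b$ they give $x\star b,\ b\star x\in\cyclic{v}$ for every $x\in A$ (again splitting on whether the relevant element has finite or infinite additive order, with Lemma~\ref{l:l29} legitimate because $\zen{2}{A}$ is not periodic). Specializing $x=b$ in the first pair and $x=a$ in the second, I obtain $a\star b,\ b\star a\in\cyclic{u}\cap\cyclic{v}$. Hence, if either $a\star b\neq 0$ or $b\star a\neq 0$, the intersection is already nonzero and the proof is finished.

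The main obstacle is the degenerate case $a\star b=b\star a=0$, where the products above vanish and give no information. Here the plan is to test the single element $w=a+b\in\zen{2}{A}$. Using only additivity of $\star$ in the second variable (Proposition~\ref{p:p1}(ii)) and the vanishing assumption, I compute $a\star w=a\star a+a\star b=u$ and $b\star w=b\star a+b\star b=v$. Applying Lemma~\ref{l:l28} or~\ref{l:l29} to $w$ now places $a\star w=u$ and $b\star w=v$ inside $\cyclic{w\star w}$. But Lemma~\ref{c:c212}/Corollary~\ref{c:c213} applied to $w$ bound the additive order of $w\star w$ by $p$, so $|\cyclic{w\star w}|\le p$; since this group contains the order-$p$ element $u$, it must equal $\cyclic{u}$, and symmetrically it must equal $\cyclic{v}$. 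Therefore $\cyclic{u}=\cyclic{w\star w}=\cyclic{v}$, in particular $\cyclic{u}\cap\cyclic{v}=\cyclic{u}\neq 0$, which contradicts the assumption that the intersection is trivial and thereby settles this last case.

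Combining the two cases yields $\cyclic{a\star a}\cap\cyclic{b\star b}\neq 0$ in all situations, and since both groups have prime order $p$ their nontrivial intersection forces $\cyclic{a\star a}=\cyclic{b\star b}$, as required. The one point to be careful about throughout is the finite-versus-infinite additive-order dichotomy for each of $a$, $b$ and $w$: each invocation of Lemmas~\ref{l:l28}/\ref{l:l29} and of Lemma~\ref{c:c212}/Corollary~\ref{c:c213} must cite the correct companion, with the finite-order versions justified by the non-periodicity of $\zen{2}{A}$.
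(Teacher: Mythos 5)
Your proof is correct and follows essentially the same route as the paper's: both arguments use Lemmas~\ref{l:l28} and~\ref{l:l29} to place $a\star b$ and $b\star a$ in $\cyclic{a\star a}\cap\cyclic{b\star b}$, reduce to the degenerate case $a\star b=b\star a=0$, and then test the element $a+b$ with the same lemmas. The only difference is the endgame of that degenerate case: the paper explicitly computes $(a+b)\star(a+b)=a\star a+b\star b$ (using $ab=a+b$ and Proposition~\ref{p:p1}(iii)) and derives a contradiction by $p$-group arithmetic from $a\star a\in\cyclic{a\star a+b\star b}$, whereas you invoke the order-at-most-$p$ bound of Lemma~\ref{c:c212}/Corollary~\ref{c:c213} on $(a+b)\star(a+b)$ to conclude $\cyclic{a\star a}=\cyclic{(a+b)\star(a+b)}=\cyclic{b\star b}$ directly, a slightly cleaner finish that avoids the multiplicative computation.
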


\begin{proof}
Using Lemma~\ref{l:l21} we see that $u=a\star a, v=b\star b$ both have finite additive order. Suppose, for a contradiction, that $\cyclic{u}\cap\cyclic{v}=0$. It follows from Lemmas~\ref{l:l28} and \ref{l:l29} that $a\star b, b\star a\in \cyclic{u}\cap\cyclic{v}=0$. Thus $a+b =ab$ and  Proposition~\ref{p:p1} gives
\begin{align*}
(a+b)\star (a+b)&=(ab)\star (a+ b) =(ab)\star a +(ab)\star b\\
&=a\star a+ b\star b=u+v.
\end{align*}
 Again using Lemmas~\ref{l:l28} and \ref{l:l29}  we have $a\star (a+b)\in \cyclic{(a+b)\star (a+b)}=\cyclic{u+v}$. On the other hand $a\star (a+b)=a\star a+ a\star b=u$. Thus $u\in\cyclic{u+v}$ and a contradiction  follows easily, since $\cyclic{u}\cap\cyclic{v}=0$ and $\cyclic{u}, \cyclic{v}$ are $p$-groups. Thus $\cyclic{u}\cap\cyclic{v}\neq 0$ and Lemma~\ref{c:c212} and Corollary~\ref{c:c213} prove the rest of the result.
\end{proof}

\begin{lemma}\label{l:l214}
Let $A$ be a T-brace and suppose that the torsion subgroup of the additive group of $\zet{A}$ is a $p$-group for some prime $p$. If $\zen{2}{A}\neq \zet{A}$ and $K=\zen{2}{A}\star \zen{2}{A}$,  then $\zen{2}{A}/K\leq \zet{A/K}$. Furthermore, $K$ has order $p$.
\end{lemma}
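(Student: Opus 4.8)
The plan is to determine the structure of $K=\zen{2}{A}\star\zen{2}{A}$ first, and then read off the central quotient statement from that structure.

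I would begin with two elementary facts about $K$. Since every $a\in\zen{2}{A}$ satisfies $a\star x,\,x\star a\in\zet{A}$ for all $x\in A$ (this is exactly the meaning of $\zen{2}{A}$ as the preimage of $\zet{A/\zet{A}}$), each generator $a\star b$ of $K$ lies in $\zet{A}$, so $K\subseteq\zet{A}$. Moreover each such $a\star b$ has finite additive order by Lemma~\ref{l:l21}(ii), so $K$ is contained in the torsion subgroup of $\zet{A}$, which is a $p$-group by hypothesis; hence $K$ is a $p$-group. Throughout I would keep in force the non-periodicity of $\zen{2}{A}$ (see the final paragraph), since this is what allows Lemmas~\ref{l:l28} and \ref{l:l29} to be applied uniformly to every element of $\zen{2}{A}$: for any $a'\in\zen{2}{A}$ and any $x\in A$ one has $a'\star x,\,x\star a'\in\cyclic{a'\star a'}$, the two lemmas covering the cases where $a'$ has infinite and finite additive order respectively.

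Next I would collapse $K$ to a single cyclic subgroup. Because $\zen{2}{A}\neq\zet{A}$, pick $a\in\zen{2}{A}\setminus\zet{A}$. The displayed inclusions show that $a'\star a'=0$ forces $a'\in\zet{A}$, so our chosen $a$ has $a\star a\neq0$. Taking $x=b'\in\zen{2}{A}$ in those inclusions shows every generator $a'\star b'$ of $K$ lies in $\cyclic{a'\star a'}$; when $a'\star a'\neq0$, Lemma~\ref{l:l210} gives $\cyclic{a'\star a'}=\cyclic{a\star a}$, and when $a'\star a'=0$ the generator is $0$. Hence every generator lies in $\cyclic{a\star a}$, so $K\subseteq\cyclic{a\star a}$, while $a\star a\in K$ yields the reverse inclusion; thus $K=\cyclic{a\star a}$. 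Finally Lemma~\ref{c:c212} (if $a$ has infinite additive order) or Corollary~\ref{c:c213} (if $a$ has finite additive order) bounds the order of $a\star a$ by $p$, and since $a\star a\neq0$ this order is exactly $p$, giving $|K|=p$.

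For the central quotient, note $K=\cyclic{a\star a}$ with $a\star a\in\zet{A}$, so $K$ is an ideal of $A$ by Lemma~\ref{l:extra1}(ii) and $A/K$ is a brace. For arbitrary $a'\in\zen{2}{A}$ and $x\in A$ the inclusions from the second paragraph give $a'\star x,\,x\star a'\in\cyclic{a'\star a'}\subseteq\cyclic{a\star a}=K$, so $(a'+K)\star(x+K)=(x+K)\star(a'+K)=K$; that is, $a'+K\in\zet{A/K}$, which is the assertion $\zen{2}{A}/K\subseteq\zet{A/K}$. The main obstacle, and the point I would justify carefully, is the non-periodicity of $\zen{2}{A}$ that is needed to invoke Lemmas~\ref{l:l29}, \ref{l:l210} and Corollary~\ref{c:c213}: Theorem~\ref{thma} already forces the additive group of $\zet{A}$ to fail to be torsion-free (since $\zen{2}{A}\neq\zet{A}$ rules out $\zen{\infty}{A}=\zet{A}$), and under the standing non-periodic hypothesis of this section $\zet{A}$ contains an element of infinite additive order, whence $\zen{2}{A}\supseteq\zet{A}$ is non-periodic. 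The remaining subtlety, controlling the off-diagonal products $a'\star b'$, is precisely what the reduction to the single class $\cyclic{a\star a}$ via Lemma~\ref{l:l210} resolves.
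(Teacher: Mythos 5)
Your proof is correct and takes essentially the same route as the paper's own: both arguments place every product $x\star y$ (with $x\in A$, $y\in\zen{2}{A}$) inside one cyclic group $\cyclic{a\star a}$ of order $p$ by combining Lemmas~\ref{l:l28}, \ref{l:l29} and \ref{l:l210} with Lemma~\ref{c:c212}/Corollary~\ref{c:c213}, and then read off both conclusions. The only differences are cosmetic: the paper selects $a\in\zen{2}{A}\setminus\zet{A}$ of infinite additive order and invokes Lemma~\ref{l:l5} to get $a\star a\neq 0$, whereas you take an arbitrary such $a$ and treat the two order cases separately, and you make explicit the non-periodicity of $\zen{2}{A}$ that the paper's proof also uses, but tacitly.
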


\begin{proof}
Since a nonperiodic abelian group is generated by its elements of infinite order, it is easy to see that there is an element $a$ of infinite additive order such that $a\in\zen{2}{A}\setminus\zet{A}$.  Lemma~\ref{l:l5} and Corollary~\ref{c:c213} imply that $c=a\star a\neq 0$ has additive order $p$. Let $L=\cyclic{c}\leq \zet{A}$. Then $L$ is an  ideal of $A$.  If $b\in\zen{2}{A}$ is such that $b\star b\neq 0$, then 
Lemma~\ref{l:l210} shows that $L= \cyclic{b\star b}$. Lemma~\ref{l:l28} and Lemma~\ref{l:l29} imply that if $x\in A, y\in\zen{2}{A}$, then $x\star y, y\star x\in L$. It follows that $\zen{2}{A}/L\leq \zet{A/L}$ and $K=\zen{2}{A}\star \zen{2}{A}\leq L$, as required.
\end{proof}

\begin{corollary}\label{c:c215}
Let $A$ be a T-brace and suppose that the torsion subgroup of the additive group of $\zet{A}$ is a $p$-group for some prime $p$. Suppose that $A$ is a Smoktunowicz-nilpotent brace such that $\zl{A}=k$ for  some positive integer $k$. If the additive group of $A$ is not periodic, then $K=A\star A$ is finite of order at most $p^{k-1}$.
\end{corollary}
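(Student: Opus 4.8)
The plan is to argue by induction on $k=\zl{A}$, at each step stripping off a central ideal of order $p$ from the bottom of the upper $\star$-central series by means of Lemma~\ref{l:l214}. Before starting the induction I would strengthen the hypothesis to a statement about the whole brace: the torsion subgroup $T$ of $(A,+)$ is a $p$-group. Indeed $A=\zen{k}{A}$ is $\star$-hypercentral, hence locally Smoktunowicz-nilpotent, so by Theorem~\ref{t:marco} every Sylow $q$-subgroup $P_q$ of $T$ is an ideal of $A$. If $q\ne p$ and $P_q\ne 0$, then $P_q$ is a non-trivial ideal contained in $A=\zefin{A}$, so Proposition~\ref{p:prop4} yields $P_q\cap\zet{A}\ne 0$; this is a non-trivial $q$-subgroup of $\zet{A}$, contradicting the assumption on the torsion of $\zet{A}$. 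Hence $T=P_p$ is a $p$-group, and this is the form of the hypothesis that passes to quotients.

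The base case $k=1$ is immediate, since then $A=\zet{A}$ and $K=A\star A=0$. For the inductive step let $k\ge 2$ and assume the result for all smaller $\star$-central lengths. By Proposition~\ref{p:p22} the additive group of $A/\zet{A}$ is periodic, while $(A,+)$ is not; hence $\zet{A}$, and a fortiori $\zen{2}{A}$, is non-periodic. Moreover $\zen{2}{A}\ne\zet{A}$, for otherwise the upper $\star$-central series would already stabilise at level one and force $k=1$. Lemma~\ref{l:l214} now applies and gives an ideal $K_2:=\zen{2}{A}\star\zen{2}{A}$ of order $p$ with $\zen{2}{A}/K_2\le\zet{A/K_2}$. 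Put $\bar A=A/K_2$. Then $\bar A$ is again a T-brace, since the ideals of $\bar A$ correspond to the ideals of $A$ containing $K_2$ and this correspondence respects the relation of being an ideal, so transitivity is inherited; it is non-periodic because $K_2$ is finite; and its torsion subgroup $T/K_2$ is a $p$-group by the reduction above. Thus $\bar A$ satisfies all the hypotheses of the corollary.

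The heart of the argument is the inequality $\zl{\bar A}\le k-1$. Writing $N=\zen{2}{A}/K_2\le\zet{\bar A}$ one has $\bar A/N\cong A/\zen{2}{A}$, which has $\star$-central length $k-2$; since $N$ lies in the $\star$-center of $\bar A$, the lower series satisfies $\bar A^{(k-1)}\le N\le\zet{\bar A}$ and therefore $\bar A^{(k)}=\bar A^{(k-1)}\star\bar A=0$, giving $\zl{\bar A}\le k-1$. Equivalently, one may verify directly by induction on $i$ that $\zen{i+1}{A}/K_2\le\zen{i}{\bar A}$, using that a surjective brace homomorphism sends $\star$-central elements to $\star$-central elements; taking $i=k-1$ again yields $\zl{\bar A}\le k-1$. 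Since $\star$ is compatible with the canonical map and $K_2\le A\star A$, the image of $K=A\star A$ in $\bar A$ is precisely $\bar A\star\bar A=K/K_2$, so $|K|=|K_2|\cdot|\bar A\star\bar A|=p\cdot|\bar A\star\bar A|$. The inductive hypothesis gives $|\bar A\star\bar A|\le p^{\zl{\bar A}-1}\le p^{k-2}$, and hence $|K|\le p^{k-1}$, as claimed.

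I expect the class-reduction step $\zl{\bar A}\le k-1$ to be the principal obstacle: Lemma~\ref{l:l214} only records that $\zen{2}{A}$ becomes central modulo $K_2$, and one must propagate this through the entire upper $\star$-central series to see that the length actually decreases. The verification that quotients of T-braces are T-braces and that the sharpened torsion hypothesis survives passage to $\bar A$ is comparatively routine once the preliminary reduction is carried out.
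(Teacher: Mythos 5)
Your proof is correct and follows essentially the same route as the paper's: induction on $k=\zl{A}$, with Lemma~\ref{l:l214} supplying the order-$p$ ideal $L=\zen{2}{A}\star\zen{2}{A}$ satisfying $\zen{2}{A}/L\leq\zet{A/L}$, followed by an application of the induction hypothesis to $A/L$. You in fact verify several points the paper leaves implicit: that the torsion hypothesis survives passage to the quotient (via your preliminary reduction using Theorem~\ref{t:marco} and Proposition~\ref{p:prop4}), that $\zen{2}{A}$ is non-periodic and $\zen{2}{A}\neq\zet{A}$, and, most importantly, why $\zl{A/L}<\zl{A}$, which the paper merely asserts.

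One warning about that last step: your first justification of $\zl{\bar A}\leq k-1$ is not valid. From $\bar A^{(k-1)}\leq N\leq\zet{\bar A}$ you correctly get $\bar A^{(k)}=0$, i.e.\ right nilpotency of class at most $k-1$; but right nilpotency of the series $A^{(n)}$ does not by itself imply $\star$-nilpotency (it must be combined with left nilpotency, which is exactly why Smoktunowicz-nilpotency is a two-series condition), so the inference ``$\bar A^{(k)}=0$, giving $\zl{\bar A}\leq k-1$'' is false in general, and the word ``equivalently'' linking your two arguments is misleading. Fortunately your second argument --- proving $\zen{i+1}{A}/K_2\leq\zen{i}{\bar A}$ by induction on $i$, with Lemma~\ref{l:l214} as the case $i=1$ --- is valid and suffices on its own, so the proof stands on that argument alone.
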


\begin{proof}
We use induction on $\zl{A}$. When $k=1$, then $A$ is an abelian brace and $A\star A=0$ so the result easily follows in this case.  Suppose that $k>1$ and the result is true for all Smoktunowicz-nilpotent braces $B$ such that $(B,+)$ is not periodic and $\zl{B}\leq k-1$. In this case $\zen{2}{A}\neq \zet{A}$ so $L=\zen{2}{A}\star \zen{2}{A}$ has order $p$ and $\zen{2}{A}/L\leq \zet{A/L}$ by Lemma~\ref{l:l214}. From this it follows that $\zl{A/L}<\zl{A}$ and hence $K/L=(A/L)\star (A/L)$ has order at most $p^{k-2}$ by the induction hypothesis. Thus $K=A\star A$ has order at most $p^{k-1}$, as required.
\end{proof}

\begin{proof}[\textbf{Proof of Theorem~\ref{thmb}}]
Let $T$ be the torsion subgroup of the additive group of $A$. By Theorem~\ref{t:marco} $T$ is an ideal of $A$ and for every prime $p$ the Sylow $p$-subgroup $T_p$ of the additive group of $A$ is also an ideal of $A$, as Theorem~\ref{t:marco} shows. Let  $Q_p=\dir{q}{\Pi(A), q\neq p}{T}$.  The torsion subgroup of the additive group of the brace $A/Q_p$ is a $p$-group. If $A/Q_p$ is abelian, then the Sylow $p$-subgroup of $A\star A$ is trivial. If $A/Q_p$ is not an abelian brace we use Corollary~
\ref{c:c215} to deduce that $(A/Q_p)\star (A/Q_p)=(A\star A)/Q_p$ is a finite $p$-subgroup of order at most $p^{k-1}$. Since $Q_p$ is a $p'$-group it follows that the Sylow $p$-subgroup of the additive group of $A\star A$ has order at most $p^{k-1}$ as required.
\end{proof}


\providecommand{\bysame}{\leavevmode\hbox to3em{\hrulefill}\thinspace}
\providecommand{\MR}{\relax\ifhmode\unskip\space\fi MR }
\providecommand{\MRhref}[2]{%
  \href{http://www.ams.org/mathscinet-getitem?mr=#1}{#2}
}
\providecommand{\href}[2]{#2}

\end{document}